\newcommand\ZZ{\mathbb{Z}}
\newcommand\NN{\mathbb{N}}
\newcommand\CC{\mathbb{C}}
\newcommand\RR{\mathbb{R}}
\newcommand\QQ{\mathbb{Q}}
\newcommand{\KK}{\mathbb{K}} 
\newcommand\p{\mathfrak{p}} 
\newcommand{\idealnorm}{\mathfrak{N}}
\newcommand\PP{\mathbb{P}} 
\newcommand{\A}{\mathbb{A}} 
\newcommand{\OO}{\mathcal{O}} 
\newcommand{\ii}[1]{\mathcal{J}_{#1}} 
\newcommand{\ic}[1]{\mathcal{I}_{#1}} 
\newcommand{\gen}[1]{\nu_{#1}} 
\newcommand{\indexset}{{\mathcal I}} 
\newcommand{\ideals}{{\mathcal I_{\KK}}} 
\newcommand{\divclass}{{\mathbf D}} 
\newcommand{\beps}{\boldsymbol{\varepsilon}}
\newcommand{\bgamma}{\boldsymbol{\gamma}}
\DeclareMathOperator{\spec}{Spec} 
\DeclareMathOperator{\pic}{Pic} 
\DeclareMathOperator{\eff}{Eff} 
\DeclareMathOperator{\Hom}{Hom} 
\DeclareMathOperator{\rk}{rk} 
\DeclareMathOperator{\meas}{meas} 
\newtheorem{theorem}{Theorem}
\newtheorem{lemma}[theorem]{Lemma}
\theoremstyle{definition}
\newtheorem{definition}[theorem]{Definition}
\newtheorem{remark}[theorem]{Remark}
\numberwithin{theorem}{section}
\numberwithin{equation}{section}
\begin{document}

\setcounter{tocdepth}{2}

\title{Points of bounded height on certain subvarieties of toric varieties}
  
\author{Marta Pieropan} 
\author{Damaris Schindler} 

\address{Utrecht University, Mathematical Institute, Budapestlaan 6, 3584 CD Utrecht, the Netherlands}
\email{m.pieropan@uu.nl}
\address{Mathematisches Institut, Universitaet Goettingen, Bunsenstrasse 3-5, 37073 Goettingen, Germany}
\email{damaris.schindler@mathematik.uni-goettingen.de}

\date{\today}



\subjclass[2010]{11P21 (11A25, 11G50, 14G05, 14M25)}
\keywords{Hyperbola method, $m$-full numbers, Campana points, toric varieties.}

\begin{abstract}
We combine the split torsor method and the hyperbola method for toric varieties to count rational points and Campana points of bounded height on certain subvarieties of toric varieties.
\end{abstract}

\maketitle
\tableofcontents

\section{Introduction}
	We combine the split torsor method and the hyperbola method for toric varieties to count rational points
	 and Campana points of bounded height on certain subvarieties of smooth split proper toric varieties. 
	 This line of research has been initiated by Blomer and Br\"udern \cite{BB} in the setting of diagonal hypersurfaces 
	 in products of projective spaces. Other results in this direction include hypersurfaces and complete intersections 
	 in products of projective spaces \cite{Sch}, improvements for bihomogeneous hypersurfaces for degree $(2,2)$ 
	 and $(1,2)$ by Browning and Hu \cite{BroHu} and Hu \cite{Hu}, as well as generalisations to hypersurfaces 
	 in certain toric varieties by Mignot \cite{mignot1, Mignot, mignot2}.\par
	 
	The versions of hyperbola method used in all of these articles are rather close to the original one \cite{BB} 
	for products of projective spaces. In our recent work \cite{hypmethod} we established a very general form 
	of the hyperbola method for split toric varieties, in which the height condition can also globally be given by 
	the maximum of several monomials. The goal of this article is to show applications of our new hyperbola method. 
	We develop a refined framework for the split torsor method on split smooth proper toric varieties and show 
	that counting results for subvarieties of projective spaces can be carried over to toric varieties 
	by a direct application of the hyperbola method \cite{hypmethod}. With this we can prove new cases of 
	Manin's conjecture \cite{MR0974910, MR1032922} on the number of rational points of bounded height 
	on Fano varieties for certain subvarieties in toric varieties.\par
	
	The split torsor method provides a parametrisation of rational points on Fano varieties via 
	Cox rings \cite{MR1679841, MR4177266}. The Cox ring of a smooth proper toric variety $X$ 
	is a polynomial ring endowed with a grading by the Picard group of the toric variety \cite{MR1299003}. 
	Subvarieties of toric varieties are intersections of hypersurfaces, which are defined by $\pic(X)$-homogeneous 
	polynomials in the Cox ring of $X$. The subvarieties considered in this paper are defined by homogeneous elements 
	in the Cox ring of the toric variety such that each polynomial involves only variables of the same degree. 
	With the split torsor method parametrisation, the height is given by the maximum of a set of monomials and 
	this is the correct shape to apply our generalised version of the hyperbola method \cite{hypmethod}. 
	The hyperbola method reduces the counting problem to counting functions over boxes of different shapes. 
	An advantage of our method is that it is already adapted to the shape of height functions appearing. 
	Also, compared to earlier versions of the hyperbola method, we do not need estimates for lower-dimensional 
	boxes, and with this our proofs are relatively short.\par
	
	We now illustrate our approach on a number of examples. In a similar fashion, it is possible to apply 
	counting results such as \cite{birch61, HBdeltamethod, Myerson1, Myerson2, BHBdiffering}, 
	and many others, to subvarieties of toric varieties defined by elements of the Cox ring each involving 
	only variables of the same degree.

\subsection{Results}
Let $X$ be a smooth split complete toric $\QQ$-variety with open torus $T$. Let $\divclass_1,\dots, \divclass_s\in\pic(X)$ be the pairwise distinct classes of the torus-invariant prime divisors on $X$. For $i\in\{1,\dots,s\}$, let $n_i=\dim_\QQ H^0(X,\divclass_i)$. Let $H_L$ be the height associated to a semiample torus-invariant divisor $L$ on $X$ as in Section \ref{sec:heights}. 

Our first result concerns subvarieties of toric varieties defined by linear forms.

\begin{theorem}\label{thm:linear}
Let $V\subseteq X$ be a complete intersection of hypersurfaces $H_{i,l}$ with $1\leq i\leq s$, $1\leq l\leq t_i$. Assume that $[H_{i,l}]=\divclass_i$ in $\pic(X)$ for $i\in\{1,\dots,s\}$ such that $t_i\neq 0$. Assume that $V\cap T\neq\emptyset$ and $t_i\leq n_i-2$ for all $i\in\{1,\dots,s\}$. 
Assume that $L=-(K_X+\sum_{i=1}^s\sum_{l=1}^{t_i}[H_{i,l}])$ is ample.
For $B>0$, let $N_V(B)$ be the number of $\QQ$-rational points on $V\cap T$ of $H_L$-height at most $B$. Then 
$$
N_V(B)=cB(\log B)^{b-1} +O(B(\log B)^{b-2}(\log\log B)^s),
$$
where $b=\rk\pic(V)$, and $c$ is a positive constant, which is defined by \eqref{eq:leading constant} with $k=b-1$, $C_{M,\mathbf d}$ given by \eqref{eq:linear C_M}, and $\varpi_i=n_i-t_i$ for $i\in\{1,\dots,s\}$.
\end{theorem}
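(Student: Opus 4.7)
The strategy is to combine the split torsor parametrisation of $V(\QQ) \cap T$ with the generalised hyperbola method of \cite{hypmethod}. Writing the Cox ring of $X$ as $\QQ[x_\rho : \rho \in \Sigma(1)]$, rational points of $V \cap T$ lift (after quotient by the relevant units) to primitive integer tuples $(x_\rho)$ with all $x_\rho \neq 0$ satisfying the coprimality conditions of the Cox torsor together with the equations $H_{i,l}(x) = 0$; by the description of heights in Section \ref{sec:heights}, $H_L(x)$ becomes a maximum of finitely many monomials of Picard degree $[L]$ in the $x_\rho$, which is exactly the shape to which \cite{hypmethod} applies.

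The hypothesis $[H_{i,l}] = \divclass_i$ is essential. For each $i$ with $t_i \neq 0$, the $t_i$ equations $H_{i,l}=0$ are linear combinations of the $n_i$ monomials of Picard degree $\divclass_i$ appearing in the Cox ring. After a linear change of basis in $H^0(X,\divclass_i)$, one expresses $t_i$ of these monomials in terms of the remaining $\varpi_i = n_i - t_i$ ones; the assumption $t_i \leq n_i - 2$ ensures $\varpi_i \geq 2$, so no degree collapses, and $V \cap T \neq \emptyset$ keeps the parametrisation non-empty after the change of basis. Crucially, no monomial of degree $\divclass_i$ involves variables whose relations cross degrees, so the linear constraints decouple by $i$.

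I would then apply the hyperbola method of \cite{hypmethod} to decompose $N_V(B)$ as a sum of box contributions, each box being a product of size ranges for the monomials appearing in $H_L$. For each box, counting integer tuples satisfying the Cox coprimality and the linear relations among monomials of each degree reduces to a lattice point count on an affine subspace, yielding an asymptotic of the form (archimedean volume) times (product of local densities); this is where the Euler factors making up $C_{M,\bfd}$ of \eqref{eq:linear C_M} appear, and the exponents $\varpi_i$ enter through the dimensions of the free parameter spaces. Reassembling via the main theorem of \cite{hypmethod} produces an asymptotic $cB(\log B)^{b-1}$ with $b=\rk\pic(V)$ and leading constant given by \eqref{eq:leading constant}.

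The main obstacle will be producing box counts with error terms that are sufficiently uniform in the box shape so that the hyperbola summation yields the claimed saving $O(B(\log B)^{b-2}(\log\log B)^s)$; the ampleness of $L$ and the ranks $\varpi_i \geq 2$ should provide exactly the room needed. A secondary technical point is identifying the archimedean part of the constant coming from the hyperbola method with the volume factor in \eqref{eq:leading constant}, and matching the non-archimedean densities on the affine subspace with the Euler product in \eqref{eq:linear C_M}.
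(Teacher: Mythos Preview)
Your approach is essentially the one in the paper: lift to the universal torsor, use that the defining equations only involve variables of a single degree $\divclass_i$ so the box count factors over $i$, apply a lattice point count in each factor, and feed the resulting product asymptotic into the hyperbola method via Lemma~\ref{lem:counting}. Two points of your description are slightly off compared with the actual argument and are worth correcting. First, no ``linear change of basis in $H^0(X,\divclass_i)$'' is performed; doing so would disturb both the sup-norm box $\sup_j|x_{i,j}|\leq B_i$ and the divisibility condition $d_i\mid x_{i,j}$. Instead one counts points of $d_i\ZZ^{n_i}$ directly in the linear subspace $W_i=\{g_{i,1}=\dots=g_{i,t_i}=0\}$ intersected with $[-B_i,B_i]^{n_i}$, via the standard lattice point estimate, giving $F_{i,d_i}(B_i)=c_i(B_i/d_i)^{n_i-t_i}+O((B_i/d_i)^{n_i-t_i-1})$ with $c_i=\meas_{n_i-t_i}([-1,1]^{n_i}\cap W_i)/\det\Lambda_i$. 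Second, $C_{M,\mathbf d}=\prod_i c_i d_i^{-(n_i-t_i)}$ contains no local densities or Euler factors; the Euler product structure appears only after summing $\mu(\mathbf d)C_{M,\mathbf d}$ over $\mathbf d$, and one must check that this M\"obius sum converges absolutely, which follows from $f_{(n_1-t_1,\dots,n_s-t_s)}\geq 2$ (using $n_i-t_i\geq 2$ and properness of $X$).
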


We use this result as a toy example to show how to combine the hyperbola method with the universal torsor method in the context of rather general smooth split toric varieties. We now move on to results which require deeper understanding of the underlying Diophantine problems via methods from Fourier analysis.

We start with a result that concerns subvarieties of toric varieties defined by bihomogeneous polynomials. It is obtained combining the framework developed in this paper with the hyperbola method \cite{hypmethod} and preliminary counting results in boxes of different side lengths from \cite{Sch}.

\begin{theorem}\label{thm:bihomog}
Let $V\subseteq X$ be a smooth complete intersection of hypersurfaces $H_1,\dots,H_t$ of the same degree $e_1\divclass_1+e_2\divclass_2$ in $\pic(X)$.  Assume that $V\cap T\neq\emptyset$, that $n_i-te_i\geq 2$ for $i\in\{1,2\}$, and that $n_1+n_2>\dim V_1^*+\dim V_2^* + 3\cdot 2^{e_1+e_2}e_1e_2t^3$, where $V_1^*, V_2^*\subseteq\A^{n_1+n_2}$ are  affine varieties defined in \S\ref{sec:bihomog}.
Assume that $L=-([K_X]+[H_1+\dots+H_t])$ is ample.
Then there is an open subset $W\subseteq X$ such that the number $N_{V,W}(B)$ of $\QQ$-rational points on $V\cap W\cap T$ of $H_L$-height  at most $B$ satisfies 
$$
N_{V,W}(B)=cB(\log B)^{b-1} +O(B(\log B)^{b-2}(\log\log B)^s)
$$
for $B>0$, where $b=\rk\pic(V)$, and $c$ is  defined in \eqref{eq:leading constant} with $k=b-1$,  $C_{M,\mathbf d}$ given by \eqref{eq:bihom C_M},
$\varpi_i=n_i-te_i$ for $i\in\{1,2\}$, and $\varpi_i=n_i$ for $i\in\{3,\dots,s\}$.
The constant $c$ is positive if $V(\QQ_v)\neq\emptyset$ for all places $v$ of $\QQ$.
\end{theorem}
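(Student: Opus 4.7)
My plan is to combine the split torsor framework developed in the earlier sections of this paper with the bihomogeneous counting results of \cite{Sch} and the toric hyperbola method of \cite{hypmethod}. First I would apply the torsor parametrization to rewrite $N_{V,W}(B)$ as a sum over integral Cox coordinates $(\bfx_1,\dots,\bfx_s)\in\prod_{i=1}^s\ZZ^{n_i}$, subject to the coprimality and support conditions encoded by the fan of $X$ and up to a suitable unit action. Because every hypersurface $H_j$ has the same bidegree $e_1\divclass_1+e_2\divclass_2$, it is cut out in the Cox ring by a polynomial $F_j$ that is bihomogeneous of bidegree $(e_1,e_2)$ in $\bfx_1,\bfx_2$ alone and does not involve the variables $\bfx_3,\dots,\bfx_s$. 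The height $H_L$ translates into a maximum of monomials in $|\bfx_1|,\dots,|\bfx_s|$, which is precisely the shape required to feed the counting problem into \cite{hypmethod}.

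After a M\"obius inversion to resolve the coprimality conditions, the task reduces to counting integer tuples $(\bfx_1,\dots,\bfx_s)$ with $F_1=\cdots=F_t=0$ and a max-of-monomials height constraint. The hyperbola method of \cite{hypmethod} then decomposes the region dyadically into boxes $\prod_{i=1}^s[X_i,2X_i]^{n_i}$ and re-expresses the total count as a weighted sum of box counts, producing the predicted main term and error once sufficiently uniform box asymptotics are supplied. In each box the free variables $\bfx_3,\dots,\bfx_s$ contribute only their volume, so the heart of the matter is counting zeros of the bihomogeneous system $F_1=\cdots=F_t=0$ in a two-scale box of sides $X_1$ and $X_2$. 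The Birch-type hypothesis $n_1+n_2>\dim V_1^*+\dim V_2^*+3\cdot 2^{e_1+e_2}e_1e_2t^3$ is exactly what is needed so that the circle method estimates of \cite{Sch} apply and yield an asymptotic whose main term is the expected product of a singular series $\mathfrak{S}$ and a singular integral homogeneous of degree $n_1-te_1$ in $X_1$ and $n_2-te_2$ in $X_2$. The side hypothesis $n_i-te_i\geq 2$ guarantees absolute convergence of the resulting hyperbola-method sum, and also accounts for the error exponent.

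To finish, one feeds these box asymptotics into \cite{hypmethod}. Integrating the product of $\mathfrak{S}$, the singular integral, and the free factors from $\bfx_3,\dots,\bfx_s$ against the hyperbola weight identifies the leading constant with \eqref{eq:leading constant} and produces the formula \eqref{eq:bihom C_M}; positivity of $c$ under everywhere local solvability of $V$ follows from the standard Hardy-Littlewood interpretation of $\mathfrak{S}$ and of the singular integral. The main obstacle I expect is twofold. First, one must upgrade the asymptotics of \cite{Sch} so that the error terms are uniform in the box side lengths $X_1,X_2$ and save a positive power of $\min(X_1,X_2)$ strong enough to survive the hyperbola summation; it is precisely this uniformity which ultimately produces the $(\log\log B)^s$ factor appearing in the error term. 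Second, one must carefully choose the open set $W\subseteq X$ so as to excise the locus whose Cox preimage meets the bad affine varieties $V_1^*$ and $V_2^*$ on which the circle method estimates fail to be uniform.
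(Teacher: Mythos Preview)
Your proposal is correct and follows essentially the same route as the paper: torsor parametrization and M\"obius inversion reduce $N_{V,W}(B)$ to box counts $F_{\mathbf d}(B_1,\dots,B_s)$, which factor as a bihomogeneous piece in $(\bfx_1,\bfx_2)$ handled by \cite{Sch} times trivial volume factors for $\bfx_3,\dots,\bfx_s$, and then Lemma~\ref{lem:counting} (packaging \cite[Theorem~1.1]{hypmethod}) converts these into the stated asymptotic. One small correction: the open set $W$ is not built from $V_1^*,V_2^*$---those are the Birch singular loci whose dimension enters only the numerical hypothesis---but is instead the image of the open set $U$ supplied directly by \cite[Theorem~4.4]{Sch}, whose complement is a union of homogeneous hypersurfaces and hence descends to $X$.
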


Theorems \ref{thm:linear} and \ref{thm:bihomog} are compatible with Manin's conjecture \cite{MR0974910}, as $L|_V=-K_V$ by adjunction. The proofs in Sections \ref{sec:linear} and \ref{sec:bihomog} yield an asymptotic formula even if we drop the ampleness assumption on $L$.\par
Theorem \ref{thm:bihomog} as well as work of Mignot \cite{Mignot, mignot2} include the case of certain hypersurfaces in products of projective spaces. However, in comparison to Mignot's work we do not require the condition that the effective cone of the toric variety is simplicial. An example of a split toric variety with non-simplicial effective cone, where our theorem applies, is the blow-up at a torus-invariant point of $\mathbb{P}^{n_1}\times \mathbb{P}^{n_2} \times Y$ where $n_1,n_2$ are sufficiently large and $Y$ is a split del Pezzo surface of degree 6.\par

Our last result concerns sets of Campana points in the sense of \cite{PSTVA} for subvarieties defined by diagonal equations. We introduce the following integral models.
Let $\mathscr X$ be the $\ZZ$-toric scheme defined by the fan of $X$. For $i\in\{1,\dots,s\}$, let $\mathscr D_{i,1},\dots, \mathscr D_{i,n_i}$ be the torus-invariant prime divisors on $\mathscr X$ of class $\divclass_i$.

\begin{theorem}\label{thm:Campana}
Let $V\subseteq X$ be an intersection of hypersurfaces $H_1,\dots,H_t$ such that $H_i$ is defined by a homogeneous diagonal polynomial  in the Cox ring of $X$ of degree $e_i\divclass_i$ in $\pic(X)$ and with none of the coefficients equal to zero. 
Let $\mathscr V$ be the closure of $V$ in $\mathscr X$.
For $i\in\{1,\dots,s\}$, fix integers $2\leq m_{i,1}\leq \dots\leq m_{i,n_i}$. Let $\mathscr D_{\mathbf m}=\sum_{i=1}^s\sum_{j=1}^{n_i}(1-\frac 1{m_{i,j}})\mathscr D_{i,j}$.
 Assume that $V\cap T\neq\emptyset$, that $n_1,\dots,n_t\geq 2$, and for $i\in\{1,\dots,s\}$, that $\sum_{j=1}^{n_i}\frac 1{m_{i,j}}>3$, that $\sum_{j=1}^{n_i-1}\frac 1{e_im_{i,j}(e_im_{i,j}+1)}\geq 1$ if $e_i=1$,
 and $\sum_{j=1}^{n_i}\frac 1{2s_0(e_im_{i,j})}>1$ if $e_i\geq 2$, where $s_0(e_im_{i,j})$ is defined in Lemma \ref{lem:diagonal}.
Let $L=-(K_X+\mathscr D_{\mathbf m}|_X+H_1+\dots+H_t)$ be ample.
For $B>0$, let $N_V(B)$ be the number of $\ZZ$-Campana points on $(\mathscr V, \mathscr D_{\mathbf m}|_{\mathscr V})$ that lie in $T$ and have $H_L$-height at most $B$. Then 
$$
N_V(B)=cB(\log B)^{b-1} +O(B(\log B)^{b-2}(\log\log B)^s),
$$
where $b=\rk\pic(V)$, and $c$ is defined in \eqref{eq:leading constant} with $k=b-1$,  $C_{M,\mathbf d}$ given by \eqref{eq:Campana C_M},  and
$\varpi_1,\dots,\varpi_s$ given by \eqref{eq:Campana varpi}.
\end{theorem}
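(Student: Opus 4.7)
The strategy follows the general framework of the paper: parametrise Campana points via the split torsor/Cox ring, reduce the bounded-height count to box counts using the hyperbola method of \cite{hypmethod}, and inside each box invoke a diagonal-equation counting lemma, here Lemma \ref{lem:diagonal}.

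First, I would use the split torsor description of $V\cap T$ together with the Cox-ring formalism for the toric variety $X$ to translate the count of $\ZZ$-Campana points on $(\mathscr V, \mathscr D_{\mathbf m}|_{\mathscr V})$ into a count of integer tuples $(x_{i,j})$, with $1\le i\le s$ and $1\le j\le n_i$, satisfying: the diagonal equations $H_1=\cdots=H_t=0$; the coprimality conditions encoded by the irrelevant ideal of $X$; and the Campana condition, which in Cox coordinates is exactly the requirement that each $x_{i,j}$ be $m_{i,j}$-full. Under this parametrisation the height $H_L$ takes the max-of-monomials shape required by \cite{hypmethod}.

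Second, since each $H_i$ involves only the variables of Picard degree $\divclass_i$, the Diophantine problem decouples into $s$ independent diagonal subproblems once a global scale is fixed on each group of variables. Applying the hyperbola method of \cite{hypmethod} reduces the count to a weighted sum over such scales of a product $\prod_{i=1}^s N_i(\mathbf B_i)$, where $N_i(\mathbf B_i)$ counts $m_{i,\cdot}$-full integer solutions to the single diagonal equation $H_i=0$ lying in a box $\mathbf B_i$ with prescribed dyadic side lengths. I would then feed each $N_i(\mathbf B_i)$ into Lemma \ref{lem:diagonal}, obtaining an asymptotic $N_i(\mathbf B_i)=C_i\meas(\mathbf B_i)+E_i$ with a power-saving error term $E_i$. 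The three numerical hypotheses are tailored exactly to this step: $\sum_j 1/m_{i,j}>3$ guarantees convergence of the local densities and of the singular series; the condition $\sum_{j=1}^{n_i-1} 1/(e_i m_{i,j}(e_i m_{i,j}+1))\ge 1$ for $e_i=1$ is what is needed to count $m$-full solutions of a linear equation via a Dirichlet-series/sieve argument on the smallest variable; and $\sum_j 1/(2s_0(e_i m_{i,j}))>1$ for $e_i\ge 2$ is a Birch-style circle-method threshold, with $s_0(d)$ the number of variables beyond which a diagonal form of degree $d$ admits a Weyl-type bound yielding a power saving. Multiplying the factors, summing over dyadic scales, and identifying the leading constant with \eqref{eq:leading constant} via the $\varpi_i$ from \eqref{eq:Campana varpi} and the local factors in \eqref{eq:Campana C_M} then yields the claimed main term $cB(\log B)^{b-1}$.

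The main obstacle is the third step: one must prove Lemma \ref{lem:diagonal} with an error term uniform enough in the box shape that, after summing over the $(b-1)$-dimensional polytope of dyadic scales produced by the hyperbola method, one loses only a factor of order $(\log\log B)^s/\log B$ off the main term. For $e_i\ge 2$ this requires coupling Birch's circle method with a Poisson/Dirichlet-series decomposition of the $m$-fullness condition, while for $e_i=1$ it requires a careful Selberg--Delange style analysis to produce the correct local factors and power saving. Once Lemma \ref{lem:diagonal} is available, the hyperbola method \cite{hypmethod} absorbs the rest with its standard $(\log\log)^s$ loss in the error term, and the positivity and explicit form of $c$ follow directly from the local factors appearing in the lemma.
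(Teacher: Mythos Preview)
Your high-level architecture is right and matches the paper: Cox-ring/torsor parametrisation, decoupling into $s$ independent factors because each $g_i$ involves only variables of degree $\divclass_i$, the hyperbola method of \cite{hypmethod} to pass from the height inequality to box counts, and Lemma~\ref{lem:diagonal} for each factor with an equation. However, there is one genuine gap and one mischaracterisation.

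\textbf{The gap: M\"obius inversion and $\mathbf d$-uniformity.} You record the coprimality conditions but never say how they are removed, and this step is not ``absorbed'' by the hyperbola method. In the paper the coprimality is handled by a generalised M\"obius function $\mu$ on $\ZZ_{>0}^s$ (Section~\ref{subsec:mobius}), giving $N_V(B)=2^{-r}\sum_{\mathbf d}\mu(\mathbf d)\sharp A(B,\mathbf d)$. For the sum over $\mathbf d$ to converge absolutely one needs $C_{M,\mathbf d}$ and $C_{E,\mathbf d}$ to be \emph{compatible with M\"obius inversion on $X$}, i.e.\ to decay like $\prod_i d_i^{-\beta_i}$ with $f_\beta>1$ (Lemma~\ref{lem:mobius}, Lemma~\ref{lem:counting}). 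This is exactly why Lemma~\ref{lem:diagonal} is stated with a divisibility parameter $d$ and proves both $c_{e,d}\ll d^{-1-\eta}$ and an error $\ll d^{-1-\eta}B^{\Gamma-\eta}$; obtaining this $d$-uniformity is a substantial part of the work (the sums $S_d(D,E)$, bounds for $f_1(q)$, $f_2(q,T,\mathbf s,\mathbf t)$, etc.). Your proposal only asks for uniformity ``in the box shape'', which is a different and insufficient requirement. You also omit that for $i>t$ there is no equation, and one instead uses the $m$-full box count \cite[Lemma~5.6]{hypmethod}, whose $d$-decay is likewise needed.

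\textbf{Mischaracterisation of the diagonal lemma.} The proof of Lemma~\ref{lem:diagonal} is not Selberg--Delange (for $e_i=1$) nor Birch's method (for $e_i\geq 2$). In both cases one first parametrises $m_j$-full integers combinatorially as $|x_j|=u_j^{m_j}\prod_r v_{j,r}^{m_j+r}$ and then applies the circle method of Browning--Yamagishi \cite{BY} (and its extension \cite{BBKOPWproc}) to the resulting diagonal form $\sum_j \varepsilon_j c_j\gamma_j \tilde u_j^{em_j}=0$ in the variables $\tilde u_j$. The hypotheses $\sum_{j\leq n_i-1}\frac{1}{e_im_{i,j}(e_im_{i,j}+1)}\geq 1$ and $\sum_j \frac{1}{2s_0(e_im_{i,j})}>1$ are precisely the applicability conditions for \cite[Theorem~2.7]{BY} and \cite[Theorem~5.3]{BBKOPWproc}, not a sieve or Birch threshold.
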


The order of growth in Theorem \ref{thm:Campana} is compatible with the Manin-type conjecture for Campana points \cite{PSTVA}, as $L|_V$ is the log anticanonical divisor of the pair $(V,\mathscr D_{\mathbf m}|_V)$ by adjunction.


We now give a number of examples, where Theorems  \ref{thm:bihomog} and \ref{thm:Campana} can be applied. Due to the range of application of the circle method, we require the Cox ring of the toric variety to have a large number of variables of the same degree. 
This holds for toric varieties with several torus-invariant prime divisors of the same degree and for products of such toric varieties. Here are some concrete examples:
\begin{itemize}
\item The projective space $\PP^n$ has Cox rings with $n+1$ variables of the same degree.
\item The blow-up of  the projective space $\PP^n$ at $l< n+1$ torus-invariant points has Cox rings with $n+1-l$ variables of the same degree. 
\item 
The blow-up of a product of toric varieties each with several torus-invariant prime divisors of the same degree. Indeed, if $X$ and $Y$ are smooth split toric varieties and the Cox ring of $X$ has $n_X$ variables of the same degree $d_X$, and the Cox ring of $Y$ has $n_Y$ variables of the same degree $d_Y$, and $P\in X\times Y$ is a point where $m_X\leq n_X$ variables of degree $d_X$ vanish and $m_Y\leq n_Y$ variables of degree $d_Y$ vanish, then the Cox ring of the blow-up of $X\times Y$ at $P$ has $m_X$ variables of the same degree $d_X-e$ and $m_Y$ variables of the same degree $d_Y-e$, where $e$ is the class of the exceptional divisor.
\end{itemize}


The structure of this article is as follows. In Section \ref{sec:toricsetting} we reformulate the height function and the multiplicative function $\mu$ for M\"obius inversion according to the principle of grouping variables of the same degree. In Section \ref{sec:subvarieties} we combine the new framework with the hyperbola method developed in \cite{hypmethod} to obtain a general counting tool for points of bounded height on subvarieties of toric varieties. Theorems \ref{thm:linear}, \ref{thm:bihomog}, and \ref{thm:Campana} are proven in Sections \ref{sec:linear}, \ref{sec:bihomog}, and \ref{sec:Campana}, respectively.

\subsection*{Acknowledgements}
We thank for their hospitality  the organizers of the workshop ``Rational Points 2023'' at Schney, where we made significant progress on this project. We are grateful to the Lorentz center in Leiden for their hospitality during the workshop ``Enumerative geometry and arithmetic''. We thank the referee for their comments, that improved the exposition of this article.
The first named author is supported by the NWO grants VI.Vidi.213.019 and OCENW.XL21.XL21.011.
For the purpose of open access, a CC BY public copyright license is applied to any Author Accepted Manuscript version arising from this submission.
\section{Toric varieties setting}
\label{sec:toricsetting}
Here we introduce geometric setup and notation for the whole paper.
We refer the reader to \cite[\S8]{MR1679841} for a concise introduction to toric varieties and their toric models over $\mathbb Z$, and to \cite{MR2810322} for an extensive treatment of toric varieties.

Let $\Sigma$ be the fan of a complete smooth split toric variety $X$ over a number field $\KK$. We denote by $\{\divclass_1,\dots,\divclass_s\}\subseteq\pic(X)$ the set of degrees of prime torus-invariant divisors of $X$. For each $i\in\{1,\dots,s\}$ we denote by $D_{i,1},\dots,D_{i,n_i}$ the torus-invariant divisors of degree $\divclass_i$, and by $\rho_{i,1},\dots,\rho_{i,n_i}$ the corresponding rays of $\Sigma$.	
Let $\indexset:=\{(i,j)\in\NN^2:1\leq i\leq s, 1\leq j\leq n_i\}$.
Let $\Sigma_{\max}$ be the set of maximal cones of $\Sigma$.
For each maximal cone $\sigma$ of $\Sigma$, let $\ii{\sigma}:=\{(i,j)\in \indexset:\rho_{i,j}\subseteq\sigma\}$, let $\ic{\sigma}=\indexset\smallsetminus\ii{\sigma}$, 
and let $I_\sigma$ be the set of indices $i\in\{1,\dots,s\}$ such that $\{(i,1),\dots,(i,n_{i})\}\cap\ic{\sigma}\neq\emptyset$.

Let $\mathscr X$ be the toric scheme defined by $\Sigma$ over $\OO_{\KK}$, and for each $(i,j)\in\indexset$, let $\mathscr D_{i,j}$ be the closure of $D_{i,j}$ in $\mathscr X$.
	
Let $R$ be the polynomial ring over $\OO_{\KK}$ with variables $x_{i,j}$ for $(i,j)\in\indexset$ and endowed with the $\pic(X)$-grading induced by assigning degree $\divclass_i$ to the variable $x_{i,j}$ for all $(i,j)\in\indexset$. 
For every torus-invariant divisor $D=\sum_{i=1}^s\sum_{j=1}^{n_i} a_{i,j} D_{i,j}$ on $X$ and every vector $\mathbf x=(x_{i,j})_{(i,j)\in\indexset}\in \CC^{\indexset}$, we write \[\mathbf x^D:=\prod_{i=1}^s\prod_{j=1}^{n_i}x_{i,j}^{a_{i,j}}.\]
	
	By \cite[\S8]{MR1679841}, $\mathscr X$ has a unique universal torsor $\pi:\mathscr Y\to\mathscr X$, and 
	 $\mathscr Y\subseteq\A^{\sharp \indexset}_{\OO_{\KK}}$ is the open subset whose complement is defined by $\mathbf x^{D_\sigma}=0$ for all 
	maximal cones $\sigma$ of $\Sigma$, where 
	$
	D_\sigma:=\sum_{(i,j)\in\ic{\sigma}}D_{i,j}
	$ 
	for all $\sigma\in\Sigma_{\max}$.
	
	Let $r$ be the rank of $\pic(X)$.
	Let $\mathcal{C}$ be a set of ideals of $\OO_{\KK}$ that form a system of representatives 
	for the class group of $\KK$. 
	As in \cite[\S 6.1]{hypmethod}, we fix a basis of $\pic(X)$, and
	for every divisor $D$ on $X$ and every tuple $\mathfrak c=(\mathfrak c_1,\dots,\mathfrak c_r)\in\mathcal C^r$,
	we write $\mathfrak c^{[D]}:=\prod_{i=1}^r \mathfrak c_i^{b_i}$ where 
	$[D]=(b_1,\dots,b_r)$  with respect to the fixed basis of $\pic(X)$.
	Then, as in \cite[\S2]{MR3514738},
 	\[
 	X(\KK)=\mathscr X(\OO_{\KK})=
 	\bigsqcup_{\mathfrak c\in\mathcal C^{r}}\pi^{\mathfrak c}
 	\left(\mathscr{Y}^{\mathfrak c}(\OO_{\KK})\right),
 	\]
 	where  $\pi^{\mathfrak c}:\mathscr{Y}^{\mathfrak c}\to\mathscr X$ is the twist 
 	of $\pi$ defined in \cite[Theorem 2.7]{MR3552013}.
 	The fibers of $\pi|_{\mathscr{Y}^{\mathfrak c}(\OO_{\KK})}$ are all isomorphic to 
 	$(\OO_{\KK}^\times)^{r}$, and
	$\mathscr{Y}^{\mathfrak c}(\OO_{\KK})\subseteq \OO_{\KK}^{\indexset}$ is the subset of 
	points $\mathbf x\in \bigoplus_{(i,j)\in\indexset}\mathfrak c^{[D_{i,j}]}$ that satisfy
	\begin{equation}
	\label{eq:coprimality_condition}
	\sum_{\sigma\in\Sigma_{\max}}\mathbf x^{D_\sigma}\mathfrak c^{-[D_\sigma]}=\OO_{\KK}.
	\end{equation}

	Let $N$ be the lattice of cocharacters of $X$. Then $\Sigma\subseteq N\otimes_\ZZ\RR$.
	For every $(i,j)\in\indexset$, let $\gen{i,j}$ be the unique generator of $\rho_{i,j}\cap N$.
	For every torus-invariant $\QQ$-divisor $D=\sum_{i=1}^s\sum_{j=1}^{n_i} a_{i,j} D_{i,j}$ of $X$ and for every 
	$\sigma\in\Sigma_{\max}$, let $u_{\sigma,D}\in\Hom_{\ZZ}(N,\QQ)$ be the character determined by 
	$u_{\sigma,D}(\gen{i,j})=a_{i,j}$ for all $(i,j)\in\ii{\sigma}$, and define 
	$D(\sigma):=D-\sum_{i=1}^s\sum_{j=1}^{n_i}u_{\sigma,D}(\gen{i,j})D_{i,j}$. 
	Then $D$ and $D(\sigma)$ are linearly equivalent.
	
\subsection{Torus-invariant divisors}

	Here we collect some properties of toric varieties and their torus-invariant divisors.

\begin{lemma}\label{lem:grouping_geom} \ 
\begin{enumerate}[label=(\roman*), ref=(\roman*)]
\item \label{item:grouping_geom_sigma}
 Let $\sigma\in\Sigma_{\max}$. 
\begin{enumerate}
\item For $i\in I_\sigma$,  there is a unique index $j_{i,\sigma}\in\{1,\dots,n_i\}$ such that $(i, j_{i,\sigma})\in\ic{\sigma}$. So $\sharp I_\sigma=\sharp \ic{\sigma}=r$.
\item For $i\in I_\sigma$, $(i,j')\in\ii{\sigma}$ for all $j'\in\{1,\dots,n_i\}\smallsetminus\{j_{i,\sigma}\}$.
\item For $i\in \{1,\dots,s\}\smallsetminus I_\sigma$, $\{(i,1),\dots,(i,n_{i})\}\subseteq\ii{\sigma}$.
\end{enumerate}
\end{enumerate}
Let $D$ be a torus-invariant $\QQ$-divisor on $X$.
For $\sigma\in\Sigma_{\max}$, write $$D(\sigma)=\sum_{i=1}^s\sum_{j=1}^{n_i}\alpha_{i,j,\sigma}D_{i,j}. $$ For $i\in\{1,\dots,s\}$, let $\alpha_{i,\sigma}=\sum_{j=1}^{n_i}\alpha_{i,j,\sigma}$. 
\begin{enumerate}[resume, label=(\roman*), ref=(\roman*)]
\item \label{item:grouping_geom_Lsigma} 
Let $\sigma\in\Sigma_{\max}$. Then $D(\sigma)=\sum_{i\in I_\sigma}\alpha_{i,\sigma}D_{i,j_{i,\sigma}}$.
\item \label{item:grouping_geom_alpha_isigma}
Let $\sigma,\sigma'\in\Sigma_{\max}$.
If there are $i\in I_{\sigma}$ and $j\in\{1,\dots,n_i\}$ such that  $\ii{\sigma}\cap\ii{\sigma'}=\ii{\sigma}\smallsetminus\{(i,j)\}$, then $I_\sigma=I_{\sigma'}$ and $\alpha_{i',\sigma}=\alpha_{i',\sigma'}$ for all $i'\in\{1,\dots,s\}$.
\item \label{item:grouping_geom_special_sigma}
Let $\sigma\in \Sigma_{\max}$ and for every $i\in I_\sigma$, let $j_i\in\{1,\dots,n_i\}$. Then there exists a unique $\sigma'\in\Sigma_{\max}$ such that $I_{\sigma'}=I_\sigma$,  $(i,j_i)\in\ic{\sigma'}$ for $i\in I_\sigma$, and $\alpha_{i,\sigma'}=\alpha_{i,\sigma}$ for  $i\in\{1,\dots,s\}$.
\item \label{item:grouping_equiv_rel}
The relation $\sigma\sim \sigma'$ if and only if $I_\sigma=I_{\sigma'}$ defines an equivalence relation on $\Sigma_{\max}$, and the equivalence class of $\sigma$ has cardinality $\prod_{i\in I_\sigma}n_i$.
\item \label{item:grouping_f}
Let $\mathcal J\subseteq\mathcal I$ minimal for inclusion and such that $ \mathcal J\cap\mathcal I_\sigma\neq\emptyset$ for all $\sigma\in\Sigma_{\max}$. Let $i\in\{1,\dots,s\}$ such that $\{(i,1),\dots,(i,n_i)\}\cap\mathcal J\neq\emptyset$. Then $\{(i,1),\dots,(i,n_i)\}\subseteq\mathcal J$.
\end{enumerate}
\end{lemma}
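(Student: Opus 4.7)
The plan is to work through the six items in order, each building on the previous. Parts (i)--(iii) are linear-algebraic consequences of the exact sequence $0 \to M \to \ZZ^{\indexset} \to \pic(X) \to 0$ combined with the distinctness of the $\divclass_i$; parts (iv)--(vi) are combinatorial consequences built on them.

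For part (i), I would use that since $X$ is smooth and complete of dimension $d=\dim X$, each maximal cone $\sigma$ is generated by the primitive rays $\{\gen{i,j} : (i,j) \in \ii{\sigma}\}$ forming a $\ZZ$-basis of $N$; hence $\#\ii{\sigma}=d$ and $\#\ic{\sigma}=\#\indexset-d=r$. Since a character $\chi\in M$ is determined by its values on this basis, the projection $\ZZ^{\indexset}\to\ZZ^{\ic{\sigma}}$ induces an isomorphism $\pic(X)\cong\ZZ^{\ic{\sigma}}$ sending the image of the standard basis vector indexed by $(i,j)\in\ic{\sigma}$ to $\divclass_i$. The distinctness of the $\divclass_i$ then forbids any class to repeat in $\{\divclass_i : (i,j)\in\ic{\sigma}\}$, yielding (a); items (b) and (c) follow. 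Part (ii) is direct from the definition: $u_{\sigma,D}$ kills the coefficients in $\ii{\sigma}$, so $D(\sigma)=\sum_{(i,j)\in\ic{\sigma}}\alpha_{i,j,\sigma}D_{i,j}$, which by (i) equals $\sum_{i\in I_\sigma}\alpha_{i,\sigma}D_{i,j_{i,\sigma}}$.

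For part (iii), $\ii{\sigma}$ and $\ii{\sigma'}$ each have $d$ elements and share $d-1$, so $\ii{\sigma'}=(\ii{\sigma}\setminus\{(i,j)\})\cup\{(i'',j'')\}$ for some $(i'',j'')\in\ic{\sigma}$. Applying (i) to $\sigma'$ and requiring that no $\divclass$ repeats in $\ic{\sigma'}$ forces $i''=i$, hence $j''=j_{i,\sigma}$, $I_{\sigma'}=I_\sigma$, and $j_{i,\sigma'}=j$. The equality $\alpha_{i',\sigma}=\alpha_{i',\sigma'}$ for each $i'$ then follows by comparing $[D(\sigma)]=[D(\sigma')]=[D]$ in the common $\pic(X)$-basis $\{\divclass_{i'} : i'\in I_\sigma\}$. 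Part (iv) is the heart of the lemma, and I would induct on $d(\sigma,(j_i)):=\#\{i\in I_\sigma : j_i\neq j_{i,\sigma}\}$. Uniqueness is immediate, since $\ic{\sigma'}$ is forced to be $\{(i,j_i) : i\in I_\sigma\}$. For existence, if $d\geq 1$ pick $i_0\in I_\sigma$ with $j_{i_0}\neq j_{i_0,\sigma}$, so $(i_0,j_{i_0})\in\ii{\sigma}$. Completeness of the fan ensures that the wall obtained by dropping $\gen{i_0,j_{i_0}}$ from $\sigma$ is a face of exactly one other maximal cone $\sigma_1$; applying (iii) to $(\sigma,\sigma_1)$ forces the added ray to be $\gen{i_0,j_{i_0,\sigma}}$, yielding $j_{i_0,\sigma_1}=j_{i_0}$. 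Thus $d(\sigma_1,(j_i))=d-1$ and the induction hypothesis finishes the argument; $\alpha_{i,\sigma'}=\alpha_{i,\sigma}$ is preserved along the chain by (iii).

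Part (v) is then immediate: the assignment $\sigma'\mapsto(j_{i,\sigma'})_{i\in I_\sigma}$ is a bijection between the equivalence class of $\sigma$ and $\prod_{i\in I_\sigma}\{1,\dots,n_i\}$. For part (vi), I would argue by contradiction: if $(i,j_0)\in\mathcal J$ but $(i,j_1)\notin\mathcal J$ for some $j_1$, then removing $(i,j_0)$ breaks the hitting property by minimality, so some $\sigma\in\Sigma_{\max}$ satisfies $\ic{\sigma}\cap\mathcal J=\{(i,j_0)\}$, forcing $j_{i,\sigma}=j_0$; the $\sigma'$ supplied by (iv) with $j_{i,\sigma'}=j_1$ and the remaining indices unchanged then has $\ic{\sigma'}\cap\mathcal J=\emptyset$, contradicting the hitting property at $\sigma'$.

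The main obstacle is part (iv), where a single wall-crossing must be upgraded to an arbitrary permutation of the selected indices within each group. This rests on combining the combinatorial rigidity forced by the distinctness of the $\divclass_i$ (encoded in (iii), which pins down precisely which ray a wall-crossing must add) with the global completeness of the fan (so that every wall is a face of exactly two maximal cones and wall-crossings are always available to iterate).
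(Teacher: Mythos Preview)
Your proof is correct and follows essentially the same approach as the paper: part (i) via the basis $\{\divclass_i : i\in I_\sigma\}$ of $\pic(X)$, part (iii) via cardinality and that basis, part (iv) via iterated wall-crossings (the paper cites \cite[Lemma 8.9]{MR1679841} for the existence of the neighbouring cone where you invoke completeness directly), and part (vi) via a single wall-crossing. The only cosmetic differences are that you organise (iv) as an induction on the number of mismatches rather than iterating through $I_\sigma$, and you phrase (vi) as a contradiction using (iv) rather than a direct argument; both are equivalent to the paper's proof.
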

\begin{proof}
Part \ref{item:grouping_geom_sigma} follows from the fact that $[D_{i,j}]=\divclass_i$ for all $j\in\{1,\dots,n_i\}$ and that the set $\{\divclass_i:i\in I_\sigma\}$ is a basis of $\pic(X)$ by \cite[Theorem 4.2.8]{MR2810322} as $X$ is smooth and proper.

Part \ref{item:grouping_geom_Lsigma} follows from part \ref{item:grouping_geom_sigma} and the fact that by construction $\alpha_{i,j,\sigma}=0$ whenever $(i,j)\in\ii{\sigma}$.

For part \ref{item:grouping_geom_alpha_isigma} we observe that 
if $\sigma\neq\sigma'$, then $\ii{\sigma}=(\ii{\sigma}\cap\ii{\sigma'})\sqcup\{(i,j)\}$ and $\ii{\sigma'}=(\ii{\sigma}\cap\ii{\sigma'})\sqcup\{(i,j_{i,\sigma})\}$, where  $j_{i,\sigma}$ is the index defined in part \ref{item:grouping_geom_sigma}.
Thus $i\in I_\sigma\cap I_{\sigma'}$, and for every index $i'\in\{1,\dots,s\}$ with $i'\neq i$ we have $\ii{\sigma}\cap \{(i',1),\dots,(i',n_{i'})\}=\ii{\sigma'}\cap \{(i',1),\dots,(i',n_{i'})\} \subseteq\ii{\sigma}\cap\ii{\sigma'}$. Recall that $[D(\sigma)]=\sum_{i\in I_\sigma}\alpha_{i,\sigma}\divclass_i$ and $[D(\sigma')]=\sum_{i\in I_{\sigma'}}\alpha_{i,\sigma'}\divclass_i$. Now the result follows as $[D(\sigma)]=[D(\sigma')]$ in $\pic(X)$ and $\{\divclass_i:i\in I_\sigma\}$ is a basis of $\pic(X)$.

For part \ref{item:grouping_geom_special_sigma}, write $I_\sigma=\{i_1,\dots,i_r\}$. 
We construct by induction $\sigma_1,\dots,\sigma_r$ such that for each $l\in\{1,\dots,r\}$, $(i_1,j_{i_1}),\dots,(i_l,j_{i_l})\in\ic{\sigma_l}$, $I_{\sigma_l}=I_\sigma$ and $\alpha_{i,\sigma_l}=\alpha_{i,\sigma}$ for all $i\in\{1,\dots,s\}$. 
If $(i_1,j_{i_1})\in\ic{\sigma}$, let $\sigma_1=\sigma$. 
Otherwise, $(i_1,j_{i_1})\in\ii{\sigma}$ and by \cite[Lemma 8.9]{MR1679841} there is $\sigma_1\in\Sigma_{\max}$ such that $\ii{\sigma_1}\cap\ii{\sigma}=\ii{\sigma}\smallsetminus\{(i_1,j_{i_1})\}$. 
Since $i_1\in I_\sigma$, by part \ref{item:grouping_geom_alpha_isigma} we have $I_{\sigma_1}=I_\sigma$ and $\alpha_{i,\sigma_1}=\alpha_{i,\sigma}$ for all $i\in\{1,\dots,s\}$. 
Assume that we have constructed $\sigma_{l-1}$ for given $l\leq r$. If $(i_l,j_{i_l})\in\ic{\sigma_{l-1}}$, let $\sigma_l=\sigma_{l-1}$. 
Otherwise, $(i_l,j_{i_l})\in\ii{\sigma_{l-1}}$ and by \cite[Lemma 8.9]{MR1679841} there is $\sigma_l\in\Sigma_{\max}$ such that $\ii{\sigma_l}\cap\ii{\sigma_{l-1}}=\ii{\sigma_{l-1}}\smallsetminus\{(i_l,j_{i_l})\}$. 
Since $i_l\in I_{\sigma_{l-1}}$, by part \ref{item:grouping_geom_alpha_isigma} we have $I_{\sigma_l}=I_{\sigma_{l-1}}=I_\sigma$ and $\alpha_{i,\sigma_l}=\alpha_{i,\sigma_{l-1}}=\alpha_{i,\sigma}$ for all $i\in\{1,\dots,s\}$. 
Since $(i_1,j_{i_1}),\dots,(i_{l-1},j_{i_{l-1})}\in\ic{\sigma_{l-1}}$ and $\ii{\sigma_l}=(\ii{\sigma_{l-1}}\cap\ii{\sigma_l})\cup\{(i_l,j_{i_l,\sigma_{l-1}})\}$, where $j_{i_l,\sigma_{l-1}}$ is the index defined in part \ref{item:grouping_geom_sigma}, we conclude that $(i_1,j_{i_1}),\dots,(i_l,j_{i_l})\in\ic{\sigma_l}$. Take $\sigma'=\sigma_r$.
The uniqueness of $\sigma'$ follows from part \ref{item:grouping_geom_sigma}, as $\sigma'$ is completely determined by $\ic{\sigma'}$.

Part \ref{item:grouping_equiv_rel} is a direct consequence of part \ref{item:grouping_geom_special_sigma}.

For part \ref{item:grouping_f},
let $j\in\{1,\dots,n_i\}$ such that $(i,j)\in\mathcal J$. By minimality of $\mathcal J$, there exists $\sigma\in\Sigma_{\max}$ such that $\mathcal J\cap\mathcal I_\sigma=\{(i,j)\}$. If $n_i>1$, let $j'\in\{1,\dots,n_i\}\smallsetminus\{j\}$. By \cite[Lemma 8.9]{MR1679841} there is $\sigma'\in\Sigma_{\max}$ such that $\mathcal J_{\sigma'}\cap\mathcal J_\sigma=\mathcal J_\sigma\smallsetminus\{(i,j')\}$. Hence, $\mathcal I_{\sigma'}=(\mathcal I_\sigma\smallsetminus\{(i,j)\})\cup\{(i,j')\}$. Since $\mathcal J\cap (\mathcal I_\sigma\smallsetminus\{(i,j)\})=\emptyset$ and $\mathcal J\cap\mathcal I_{\sigma'}\neq\emptyset$, we conclude that $(i,j')\in\mathcal J$.
\end{proof}

\subsection{Heights}
\label{sec:heights}

Let $L$ be a semiample torus-invariant $\QQ$-divisor on $X$. Let $H_L$ be the height on $X$ defined by $L$ as in \cite[\S 6.3]{hypmethod}. For $\sigma\in\Sigma_{\max}$, write $L(\sigma)=\sum_{i=1}^s\sum_{j=1}^{n_i}\alpha_{i,j,\sigma}D_{i,j}$ and $\alpha_{i,\sigma}=\sum_{j=1}^{n_i}\alpha_{i,j,\sigma}$ for all $i\in\{1,\dots,s\}$. Let $\Omega_{\KK}$ be the set of places of $\KK$.

\begin{lemma}
\label{lem:height}
For every $\nu\in\Omega_{\KK}$ and every $\mathbf x\in \mathscr Y(\KK)$, we have 
$$
\sup_{\sigma\in\Sigma_{\max}}|x^{L(\sigma)}|_\nu=\sup_{\sigma\in\Sigma_{\max}}\prod_{i=1}^s\sup_{1\leq j\leq n_i}|x_{i,j}|_\nu^{\alpha_{i,\sigma}}.
$$
\end{lemma}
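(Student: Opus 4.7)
The plan is to use the structural information in Lemma \ref{lem:grouping_geom} to reduce both sides to the same $\sup$ over $\Sigma_{\max}$ of a product indexed by $I_\sigma$, and then to invoke part \ref{item:grouping_geom_special_sigma} of that lemma as the key flexibility tool: it lets me replace a given $\sigma$ by a cone $\sigma'$ whose complementary index set $\ic{\sigma'}$ selects any prescribed choice of indices $j_i^*\in\{1,\dots,n_i\}$.

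First I would rewrite the left-hand side. By Lemma \ref{lem:grouping_geom}\ref{item:grouping_geom_Lsigma}, $L(\sigma)=\sum_{i\in I_\sigma}\alpha_{i,\sigma}D_{i,j_{i,\sigma}}$, so
$$|x^{L(\sigma)}|_\nu=\prod_{i\in I_\sigma}|x_{i,j_{i,\sigma}}|_\nu^{\alpha_{i,\sigma}}.$$
Since $L$ is semiample, each $L(\sigma)$ is effective, giving $\alpha_{i,j,\sigma}\geq 0$, and in particular $\alpha_{i,\sigma}\geq 0$. By the construction of $L(\sigma)$ one has $\alpha_{i,j,\sigma}=0$ whenever $(i,j)\in\ii{\sigma}$; combined with Lemma \ref{lem:grouping_geom}\ref{item:grouping_geom_sigma}, this forces $\alpha_{i,\sigma}=0$ for every $i\notin I_\sigma$. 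Hence on the right-hand side the factors with $i\notin I_\sigma$ equal $1$, and the nonnegativity of the exponents lets the sup pass through the power, yielding
$$\prod_{i=1}^s\sup_{1\leq j\leq n_i}|x_{i,j}|_\nu^{\alpha_{i,\sigma}}=\prod_{i\in I_\sigma}\Bigl(\sup_{1\leq j\leq n_i}|x_{i,j}|_\nu\Bigr)^{\alpha_{i,\sigma}}.$$

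The inequality $\leq$ is then immediate: $|x_{i,j_{i,\sigma}}|_\nu\leq\sup_{j}|x_{i,j}|_\nu$ combined with $\alpha_{i,\sigma}\geq 0$ gives the termwise bound $|x^{L(\sigma)}|_\nu\leq \prod_{i\in I_\sigma}(\sup_j|x_{i,j}|_\nu)^{\alpha_{i,\sigma}}$, and taking the $\sup$ over $\sigma$ on both sides concludes.

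For the reverse inequality, which is the crux, I would fix $\sigma\in\Sigma_{\max}$ and, for each $i\in I_\sigma$, pick $j_i^*\in\{1,\dots,n_i\}$ achieving $\sup_j|x_{i,j}|_\nu$. Applying Lemma \ref{lem:grouping_geom}\ref{item:grouping_geom_special_sigma} with these $j_i^*$ produces $\sigma'\in\Sigma_{\max}$ with $I_{\sigma'}=I_\sigma$, $(i,j_i^*)\in\ic{\sigma'}$ (so $j_{i,\sigma'}=j_i^*$ by Lemma \ref{lem:grouping_geom}\ref{item:grouping_geom_sigma}) and $\alpha_{i,\sigma'}=\alpha_{i,\sigma}$ for all $i$. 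Then
$$\prod_{i\in I_\sigma}\bigl(\sup_j|x_{i,j}|_\nu\bigr)^{\alpha_{i,\sigma}}=\prod_{i\in I_{\sigma'}}|x_{i,j_{i,\sigma'}}|_\nu^{\alpha_{i,\sigma'}}=|x^{L(\sigma')}|_\nu\leq\sup_{\tau\in\Sigma_{\max}}|x^{L(\tau)}|_\nu,$$
and taking the $\sup$ over $\sigma$ on the left yields the desired inequality. The only delicate point I anticipate is the role of the semiample hypothesis: it guarantees nonnegative exponents so that $\sup_j$ and exponentiation commute, and it also ensures we never hit $0$ to a negative power. Dropping it would force $\sup_j|x_{i,j}|_\nu^{\alpha_{i,\sigma}}=(\inf_j|x_{i,j}|_\nu)^{\alpha_{i,\sigma}}$ for those $i$ with $\alpha_{i,\sigma}<0$, and vanishing coordinates would obstruct the clean matching between $\sigma$ and $\sigma'$.
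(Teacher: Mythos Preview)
Your proof is correct and follows essentially the same approach as the paper: rewrite $|x^{L(\sigma)}|_\nu$ via Lemma \ref{lem:grouping_geom}\ref{item:grouping_geom_Lsigma}, then use Lemma \ref{lem:grouping_geom}\ref{item:grouping_geom_special_sigma} to realise $\prod_{i\in I_\sigma}(\sup_j|x_{i,j}|_\nu)^{\alpha_{i,\sigma}}$ as $|x^{L(\sigma')}|_\nu$ for a suitable $\sigma'$. The paper's version is terser and leaves the easy inequality implicit, whereas you spell out both directions and make the role of semiampleness (nonnegative $\alpha_{i,\sigma}$) explicit; the substance is the same.
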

\begin{proof}
Fix $\nu\in\Omega_{\KK}$ and $\mathbf x\in \mathscr Y(\KK)$.
By Lemma \ref{lem:grouping_geom}\ref{item:grouping_geom_Lsigma}, we have
\begin{gather*} 
 \sup_{\sigma\in\Sigma_{\max}}|x^{L(\sigma)}|_\nu=\sup_{\sigma\in\Sigma_{\max}}\prod_{i\in I_\sigma}|x_{i,{j_{i,\sigma}}}|_\nu^{\alpha_{i,\sigma}}. 
\end{gather*}
For every $i\in\{1,\dots,s\}$, let $j_i\in\{1,\dots,n_{i}\}$ such that $|x_{i,j_i}|_\nu=\sup_{1\leq j\leq n_{i}}|x_{i,j}|_\nu$.
Let $\sigma\in\Sigma_{\max}$. By Lemma \ref{lem:grouping_geom}\ref{item:grouping_geom_special_sigma} there is $\sigma'\in\Sigma_{\max}$ such that $I_{\sigma'}=I_\sigma$, $(i,j_i)\in\ic{\sigma'}$ for all $i\in I_\sigma$, and $\alpha_{i,\sigma'}=\alpha_{i,\sigma}$ for all $i\in\{1,\dots,s\}$.
Then 
\begin{equation*}
|x^{L(\sigma')}|_\nu=\prod_{i\in I_{\sigma'}}|x_{i,j_i}|_\nu^{\alpha_{i,\sigma'}}=\prod_{i\in I_\sigma}\sup_{1\leq j\leq n_{i}}|x_{i,j}|_\nu^{\alpha_{i,\sigma}}=\prod_{i=1}^s\sup_{1\leq j\leq n_{i}}|x_{i,j}|_\nu^{\alpha_{i,\sigma}}.
\qedhere
\end{equation*}
\end{proof}

Thus $H_L(\mathbf x)=\prod_{\nu\in\Omega_{\KK}}\sup_{\sigma\in\Sigma_{\max}}\prod_{i=1}^s\sup_{1\leq j\leq n_{i}}|x_{i,j}|_\nu^{\alpha_{i,\sigma}}$ for all $\mathbf x\in\mathscr Y(\KK)$.

\subsection{Coprimality conditions}
We now rewrite the coprimality condition \eqref{eq:coprimality_condition} in terms of the notation introduced in this paper.

\begin{lemma}\label{lem:grouping_coprimality}
For all $\mathbf x\in \bigoplus_{(i,j)\in\indexset}\mathfrak c^{[D_{i,j}]}$,
$$\sum_{\sigma\in \Sigma_{\max}}\mathbf x^{D_\sigma}\mathfrak c^{-[D_\sigma]}=\sum_{\sigma\in\Sigma_{\max}}\prod_{i\in I_\sigma}(x_{i,1},\dots,x_{i,n_i})\mathfrak c^{-\divclass_i}.$$
\end{lemma}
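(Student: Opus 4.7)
The plan is to simplify $\mathbf x^{D_\sigma}\mathfrak c^{-[D_\sigma]}$ using Lemma \ref{lem:grouping_geom}, and then to regroup the sum over $\Sigma_{\max}$ according to the equivalence relation $\sigma\sim\sigma'\iff I_\sigma=I_{\sigma'}$ in order to assemble the ideals $(x_{i,1},\dots,x_{i,n_i})$ on the right-hand side.

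First, I would use Lemma \ref{lem:grouping_geom}\ref{item:grouping_geom_sigma}, which gives $\ic{\sigma}=\{(i,j_{i,\sigma}):i\in I_\sigma\}$, to rewrite $D_\sigma=\sum_{i\in I_\sigma}D_{i,j_{i,\sigma}}$. Since $[D_{i,j_{i,\sigma}}]=\divclass_i$, this yields
\[
\mathbf x^{D_\sigma}\mathfrak c^{-[D_\sigma]}=\prod_{i\in I_\sigma}x_{i,j_{i,\sigma}}\mathfrak c^{-\divclass_i},
\]
which is a principal ideal sitting inside the ideal $\prod_{i\in I_\sigma}(x_{i,1},\dots,x_{i,n_i})\mathfrak c^{-\divclass_i}$. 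So the $\subseteq$ inclusion is immediate.

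For the reverse inclusion, I would group $\Sigma_{\max}$ by equivalence classes under the relation of Lemma \ref{lem:grouping_geom}\ref{item:grouping_equiv_rel}. Fix a representative $\sigma$ for a class and use part \ref{item:grouping_geom_special_sigma} to observe that the map $\sigma'\mapsto (j_{i,\sigma'})_{i\in I_\sigma}$ is a bijection between the class of $\sigma$ and tuples in $\prod_{i\in I_\sigma}\{1,\dots,n_i\}$. Thus, summing the left-hand side over the class of $\sigma$ gives
\[
\sum_{\sigma'\sim\sigma}\prod_{i\in I_\sigma}x_{i,j_{i,\sigma'}}\mathfrak c^{-\divclass_i}=\sum_{(j_i)\in\prod_{i\in I_\sigma}\{1,\dots,n_i\}}\prod_{i\in I_\sigma}x_{i,j_i}\mathfrak c^{-\divclass_i}=\prod_{i\in I_\sigma}(x_{i,1},\dots,x_{i,n_i})\mathfrak c^{-\divclass_i},
\]
where the last equality is the usual distributivity of products of ideals over sums of ideals. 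Summing over equivalence classes, and using that the right-hand side only depends on $I_\sigma$ so that the sum of ideals over $\Sigma_{\max}$ agrees with the sum over equivalence classes by idempotence of ideal sum, gives exactly the stated identity.

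I do not anticipate any serious obstacle: the proof is essentially bookkeeping, and all the combinatorial content about the structure of $\ic{\sigma}$ and the bijection between cones in an equivalence class and tuples $(j_i)_{i\in I_\sigma}$ is already encapsulated in Lemma \ref{lem:grouping_geom}. The only point requiring a little care is the interplay between sums and products of (possibly non-principal) fractional ideals, which must be handled formally but poses no real difficulty.
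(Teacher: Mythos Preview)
Your proposal is correct and follows essentially the same approach as the paper: both arguments establish $\subseteq$ via Lemma~\ref{lem:grouping_geom}\ref{item:grouping_geom_sigma} and obtain the reverse inclusion from Lemma~\ref{lem:grouping_geom}\ref{item:grouping_geom_special_sigma}, which guarantees that every monomial $\prod_{i\in I_\sigma}x_{i,j_i}$ occurs as $\mathbf x^{D_{\sigma'}}$ for some $\sigma'\sim\sigma$. Your organization via the equivalence classes of \ref{item:grouping_equiv_rel} is a slight repackaging, but the substance is identical.
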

\begin{proof}
For $\sigma\in\Sigma_{\max}$, let $X_{\sigma}=\{\prod_{i\in I_\sigma}x_{i,j_i}: j_i\in\{1,\dots,n_i\} \ \forall i\in\{1,\dots,s\}\}$.
The inclusion $\subseteq$ is clear as $\mathbf x^{D_\sigma}\in X_{\sigma}$ and $\mathfrak c^{-[D_\sigma]}=\prod_{i\in I_\sigma}\mathfrak c^{-\divclass_i}$ for all $\sigma\in \Sigma_{\max}$. 
For the converse inclusion, fix $\sigma\in\Sigma_{\max}$ and $x\in X_{\sigma}$. For every $i\in I_\sigma$, let $j_i\in\{1,\dots,n_i\}$ such that $x=\prod_{i\in I_\sigma}x_{i,j_i}$.   By Lemma \ref{lem:grouping_geom}\ref{item:grouping_geom_special_sigma} there is $\sigma'\in\Sigma_{\max}$ such that $I_{\sigma'}=I_\sigma$ and $(i,j_{i})\in\ic{\sigma'}$ for $i\in I_\sigma$ . Then $\mathbf x^{D_{\sigma'}}=x$.
\end{proof}

\subsection{M\"obius function}
\label{subsec:mobius}
Let $\ideals$ be the set of nonzero ideals of $\mathcal O_\KK$. Let $\chi:\ideals^s\to\{0,1\}$ be the characteristic function of the subset
\begin{equation}
\label{eq:gcd}
\left\{\mathfrak b\in\ideals^s: \sum_{\sigma\in\Sigma_{\max}}\prod_{i\in I_\sigma}\mathfrak b_i=\mathcal O_{\KK}\right\}.
\end{equation}
For every $\mathfrak d\in\ideals^s$, let $\chi_{\mathfrak d}:\ideals^s\to\{0,1\}$ be the characteristic function of the subset
$$
\left\{ \mathfrak b\in \ideals^s:\mathfrak b_i\subseteq\mathfrak d_i \ \forall i\in\{1,\dots,s\}\right\}.
$$
As in \cite[Lemme 8.5.1]{MR1340296} there exists a unique multiplicative function $\mu:\ideals^s\to\ZZ$ such that 
$$
\chi=\sum_{\mathfrak d\in \ideals^s}\mu(\mathfrak d)\chi_{\mathfrak d}.
$$

Note that if $X=\PP^n_\QQ$, the function $\mu$ defined above coincides with the classical M\"obius function.

\begin{remark} \label{rem:mu}
Let $\mathfrak p\in\mathcal I_{\KK}$ be a prime ideal.
The function $\mu$ is defined recursively by the formula $\mu(\mathfrak b)=\chi(\mathfrak b)-\sum_{\mathfrak b\subsetneq\mathfrak d}\mu(\mathfrak d)$ for every $\mathfrak b\in\ideals^s$, and satisfies the following properties. 
\begin{enumerate}[label=(\roman*), ref=(\roman*)]
\item $\mu(\mathbf 1)=\chi(\mathbf 1)=1$. 
\label{item:mu 1}
\item If $e_i\geq 2$ for some $i\in\{1,\dots,s\}$, then $\mu(\mathfrak p^{e_1},\dots,\mathfrak p^{e_s})=0$, as in that case $\chi(\mathfrak p^{e_1},\dots,\mathfrak p^{e_s})=\chi(\mathfrak p^{e'_1},\dots,\mathfrak p^{e'_s})$ for $e'_i=e_i-1$ and $e'_l=e_l$ for all $l\neq i$. 
\label{item:mu geq2}
\item By induction one shows that $\mu(\mathfrak p^{e_1},\dots,\mathfrak p^{e_s})=0$ whenever $(e_1,\dots,e_s)\neq\mathbf 0$ and there is $\sigma\in\Sigma_{\max}$ such that $e_i=0$ for all $i\in I_\sigma$, as $\chi(\mathfrak p^{e_1},\dots,\mathfrak p^{e_s})=1$ if and only if there is $\sigma\in\Sigma_{\max}$ such that $e_i=0$ for all $i\in I_\sigma$. 
\label{item:mu I}
\item \label{item: mu tildef}
Let 
$$\tilde f:= \min\left \{\sharp J: J\subseteq\{1,\dots, s\}, J\cap I_\sigma\neq\emptyset \ \forall \sigma\in\Sigma_{\max} \right \}.$$ By property \ref{item:mu I}, if $\mu(\mathfrak p^{e_1},\dots,\mathfrak p^{e_s})\neq0$, then there are at least $\tilde f$ indices $i$ with $e_i=1$.
Let $J\subseteq\{1,\dots,s\}$ be smallest with respect to inclusion and such that $J\cap I_\sigma\neq\emptyset$ for all $\sigma\in\Sigma_{\max}$. Let $J'=J\smallsetminus\{j\}$ for some $j\in J$. 
Let $e_i=1$ for $i\in J$ and $e_i=0$ for $i\notin J$. Let $e_i'=e_i$ for $i\neq j$ and $e_j'=0$. Then $\chi(\mathfrak p^{e_1},\dots,\mathfrak p^{e_s})=0$ and $\chi(\mathfrak p^{e'_1},\dots,\mathfrak p^{e'_s})=1$ by minimality of $J$. Thus $\mu(\mathfrak p^{e_1},\dots,\mathfrak p^{e_s})=-1\neq0$. Hence, 
\begin{equation}
\tilde f=\min \left \{\sum_{i=1}^se_i: (e_1,\dots,e_s)\neq\mathbf 0, \mu(\mathfrak p^{e_1},\dots,\mathfrak p^{e_s})\neq0\right \}.
\label{eq:mu tildef}
\end{equation}
\end{enumerate}
\end{remark}

For $\beta=(\beta_1,\dots,\beta_s)\in\RR_{\geq 0}^s$, let 
$$
f_{\beta}:=\min \left \{\sum_{i=1}^s \beta_i e_i: (e_1,\dots,e_s)\neq\mathbf 0, \mu(\mathfrak p^{e_1},\dots,\mathfrak p^{e_s})\neq0\right \}.
$$

\begin{lemma}\label{lem:mobius}
\begin{enumerate}[label=(\roman*), ref=(\roman*)]
\item
\label{item:mobius convergence}
The series 
$$
\sum_{\mathfrak d\in \mathcal I_{\KK}^s}\frac{\mu(\mathfrak d)}{\prod_{i=1}^s\idealnorm(\mathfrak d_i)^{\beta_i}}
$$
converges absolutely if $f_\beta>1$.
\item 
\label{item:mobius positive}
If $f_\beta>1$ and $\beta_1,\dots,\beta_s\in\ZZ_{>0}$, then $\sum_{\mathfrak d\in \mathcal I_{\KK}^s}\frac{\mu(\mathfrak d)}{\prod_{i=1}^s\idealnorm(\mathfrak d_i)^{\beta_i}}>0$.
\end{enumerate}
\end{lemma}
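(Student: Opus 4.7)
The strategy for both parts is to convert the sum into an Euler product. Since $\mu$ is multiplicative, one formally has
$$\sum_{\mathfrak d\in\ideals^s}\frac{\mu(\mathfrak d)}{\prod_{i=1}^s\idealnorm(\mathfrak d_i)^{\beta_i}}=\prod_{\mathfrak p}L_\mathfrak p(\beta),\qquad L_\mathfrak p(\beta):=\sum_{\mathbf e\in\NN^s}\mu(\mathfrak p^{e_1},\dots,\mathfrak p^{e_s})\,\idealnorm(\mathfrak p)^{-\sum_i\beta_i e_i},$$
and the coefficient $\mu(\mathfrak p^{e_1},\dots,\mathfrak p^{e_s})$ depends on $\mathfrak p$ only through the combinatorial data of $\Sigma$; by Remark \ref{rem:mu}\ref{item:mu geq2} it vanishes unless $\mathbf e\in\{0,1\}^s$, so $L_\mathfrak p(\beta)$ is a sum of at most $2^s$ terms whose coefficients are bounded by a constant $C_\Sigma$ independent of $\mathfrak p$.

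For part \ref{item:mobius convergence}, the definition of $f_\beta$ implies that every nonzero contribution in $L_\mathfrak p(\beta)$ with $\mathbf e\neq\mathbf 0$ has absolute value at most $C_\Sigma\,\idealnorm(\mathfrak p)^{-f_\beta}$, so
$$L_\mathfrak p(\beta)=1+O_\Sigma\bigl(\idealnorm(\mathfrak p)^{-f_\beta}\bigr)$$
uniformly in $\mathfrak p$. Absolute convergence of $\prod_\mathfrak p L_\mathfrak p(\beta)$, and hence of the original Dirichlet series by the standard Euler product argument for multiplicative functions, follows by comparison with $\sum_\mathfrak p\idealnorm(\mathfrak p)^{-f_\beta}$, which converges under the hypothesis $f_\beta>1$ by the usual distribution of prime ideals in $\KK$.

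For part \ref{item:mobius positive}, I would prove positivity of each local factor $L_\mathfrak p(\beta)$ by relating it to a manifestly positive Dirichlet series. The identity $\chi=\sum_\mathfrak d\mu(\mathfrak d)\chi_\mathfrak d$ localised at a prime $\mathfrak p$ reads $\chi_\mathfrak p(\mathbf e)=\sum_{\mathbf e'\leq\mathbf e}\mu_\mathfrak p(\mathbf e')$, where $\mu_\mathfrak p(\mathbf e):=\mu(\mathfrak p^{e_1},\dots,\mathfrak p^{e_s})$. Interchanging summations yields the formal identity
$$\sum_{\mathbf e\in\NN^s}\chi_\mathfrak p(\mathbf e)\,\mathbf x^{\mathbf e}=\Bigl(\sum_{\mathbf e\in\NN^s}\mu_\mathfrak p(\mathbf e)\,\mathbf x^{\mathbf e}\Bigr)\prod_{i=1}^s\frac{1}{1-x_i},$$
and specialising to $x_i=\idealnorm(\mathfrak p)^{-\beta_i}\in[0,1/2]$, which lies inside the domain of absolute convergence of both sides because $\beta_i\geq 1$ and $\idealnorm(\mathfrak p)\geq 2$, produces
$$L_\mathfrak p(\beta)=F_\mathfrak p(\beta)\prod_{i=1}^s\bigl(1-\idealnorm(\mathfrak p)^{-\beta_i}\bigr),\qquad F_\mathfrak p(\beta):=\sum_{\mathbf e\in\NN^s}\chi_\mathfrak p(\mathbf e)\,\idealnorm(\mathfrak p)^{-\sum_i\beta_ie_i}.$$
The factor $F_\mathfrak p(\beta)$ is a sum of non-negative terms including the contribution $1$ coming from $\mathbf e=\mathbf 0$, so $F_\mathfrak p(\beta)\geq 1$; each factor $1-\idealnorm(\mathfrak p)^{-\beta_i}$ is strictly positive. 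Therefore $L_\mathfrak p(\beta)>0$ for every $\mathfrak p$, and the product is positive.

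The main obstacle is the bookkeeping around the Euler product factorisation and the generating-series manipulation; once the absolute convergence from part \ref{item:mobius convergence} is in hand, these reduce to checking that the specialisation $x_i=\idealnorm(\mathfrak p)^{-\beta_i}$ falls within the region of absolute convergence of both sides of the formal identity above.
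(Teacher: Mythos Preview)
Your proof is correct. Part \ref{item:mobius convergence} is essentially identical to the paper's argument: both pass to the Euler product, use Remark \ref{rem:mu}\ref{item:mu geq2} to restrict to $\mathbf e\in\{0,1\}^s$, and compare with $\sum_{\mathfrak p}\idealnorm(\mathfrak p)^{-f_\beta}$.

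For part \ref{item:mobius positive} you take a genuinely different route. The paper interprets the local factor as a $p$-adic integral: using $\idealnorm(\mathfrak p)^{-e_i\beta_i}=\prod_{j=1}^{\beta_i}\int_{\mathfrak p^{e_i}\OO_{\mathfrak p}}\mathrm dy_{i,j}$ (which is where the hypothesis $\beta_i\in\ZZ_{>0}$ enters), it rewrites $L_{\mathfrak p}(\beta)$ as $\int_{\OO_{\mathfrak p}^{\sum_i\beta_i}}\chi(\cdots)\,\mathrm d\mathbf y$ and bounds this below by the measure of the unit part, obtaining $L_{\mathfrak p}(\beta)\geq(1-\idealnorm(\mathfrak p)^{-1})^{\sum_i\beta_i}$. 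Your generating-series identity is the formal-power-series analogue of the same computation: stratifying the $p$-adic integral by $e_i=\min_j v_{\mathfrak p}(y_{i,j})$ gives exactly your factorisation $L_{\mathfrak p}(\beta)=F_{\mathfrak p}(\beta)\prod_i(1-\idealnorm(\mathfrak p)^{-\beta_i})$. Your packaging is more elementary (no $p$-adic measure theory) and in fact only uses $\beta_i>0$ rather than $\beta_i\in\ZZ_{>0}$; the paper's approach, on the other hand, makes the interpretation of the local factor as a $p$-adic density explicit, which is closer in spirit to the Peyre-type constants appearing in Manin's conjecture.
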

\begin{proof}
For part \ref{item:mobius convergence} we follow the proof of  \cite[Lemma 11.15]{MR1679841} and \cite[Proposition 4]{MR3514738}. 
For $\mathfrak p\in\mathcal I_{\KK}$  prime ideal,
let $S(\mathfrak p)=\sum_{(e_1,\dots,e_s)\in\ZZ_{\geq 0}^s}\frac{|\mu(\mathfrak p^{e_1},\dots,\mathfrak p^{e_s})|}{\prod_{i=1}^s\idealnorm(\mathfrak p)^{\beta_ie_i}}$.
As in \cite[Lemma 11.15]{MR1679841} and \cite[Proposition 4]{MR3514738},
$$
\lim_{b\to\infty}\sum_{\substack{\mathfrak d \in\mathcal I_{\KK}^s\\ \prod_{i=1}^s\idealnorm(\mathfrak d_i)\leq b}}\frac{|\mu(\mathfrak d)|}{\prod_{i=1}^s\idealnorm(\mathfrak d_i)^{\beta_i}}
=\prod_{\mathfrak p}S(\mathfrak p).
$$ 
By Remark \ref{rem:mu}\ref{item:mu geq2} the sum $S(\mathfrak p)$ is finite. By definition of $f_\beta$, if $\mu(\mathfrak p^{e_1},\dots,\mathfrak p^{e_s})\neq 0$ and $(e_1,\dots,e_s)\neq\mathbf 0$, then ${f_\beta}\leq \sum_{i=1}^s\beta_ie_i$. Thus
\begin{gather*}
S(\mathfrak p)=1+\frac 1{\idealnorm(\mathfrak p)^{f_\beta}}Q\left(\frac 1{\idealnorm(\mathfrak p)}\right),
\end{gather*}
where $Q:\RR_{\geq 0}\to \RR_{\geq 0}$ is a monotone increasing function.
Since $\mu(\mathfrak p^{e_1},\dots,\mathfrak p^{e_s})$ is independent of the choice of $\mathfrak p$, the function $Q$ is independent of the choice of $\mathfrak p$. Thus
\begin{gather*}
\sum_{\mathfrak p}\frac 1{\idealnorm(\mathfrak p)^{f_\beta}}Q\left(\frac 1{\idealnorm(\mathfrak p)}\right)
\leq [\KK:\QQ] Q(1) \sum_{n\in\ZZ_{>0}}\frac 1{n^{f_\beta}}. 
\end{gather*}

In part \ref{item:mobius positive} 
 the series is absolutely convergent by part \ref{item:mobius convergence}, hence it suffices to show that each factor of its Euler product $\prod_{\mathfrak p} S_{\mathfrak p}$ is positive. 
For a prime ideal $\mathfrak p\in\mathcal I_{\KK}$, let $\OO_{\mathfrak p}$ be the ring of integers of the completion $\KK_{\mathfrak p}$ of $\KK$ at the valuation $v_{\mathfrak p}$ defined by $\mathfrak p$. Endow $\KK_{\mathfrak p}$ with the Haar measure normalized such that $\OO_{\mathfrak p}$ has volume $1$. Then $\int_{\mathfrak p^j \OO_{\mathfrak p}}dy=\idealnorm(\mathfrak p)^{-j}$ for all $j\geq 0$ by \cite[\S 1.1.13]{MR3838446} and \cite[Proposition II.4.3]{MR1697859}. We denote by $\chi$ the characteristic function of \eqref{eq:gcd} where ideals of $\OO_{\KK}$ are replaced by ideals of $\OO_{\mathfrak p}$.
By Remark \ref{rem:mu}\ref{item:mu geq2},
\begin{gather*}
S_{\mathfrak p}=\sum_{\boldsymbol e\in\{0,1\}^s}\mu(\mathfrak p^{e_1},\dots,\mathfrak p^{e_s})\prod_{i=1}^s\idealnorm(\mathfrak p)^{-e_i\beta_i} 
\\ = \sum_{\boldsymbol e\in\{0,1\}^s}\mu(\mathfrak p^{e_1},\dots,\mathfrak p^{e_s})\prod_{i=1}^s \prod_{j=1}^{\beta_i} \int_{\mathfrak p^{e_i}} \mathrm d y_{i,j}
\\ = \int_{\OO_{\mathfrak p}^{\sum_{i=1}^s \beta_i}}\chi((y_{1,1},\dots,y_{1,\beta_1}),\dots, (y_{s,1},\dots,y_{s,\beta_s}))\prod_{i=1}^s\prod_{j=1}^{\beta_i} \mathrm d y_{i,j}
\\ \geq \int_{(\OO_{\mathfrak p}^\times)^{\sum_{i=1}^s \beta_i}}\prod_{i=1}^s\prod_{j=1}^{\beta_i} \mathrm d y_{i,j} = \left(1-\frac 1{\idealnorm(\mathfrak p)}\right)^{\sum_{i=1}^s \beta_i}>0,
\end{gather*}
as $\chi$ is a nonnegative function with $\chi(\OO_{\mathfrak p},\dots,\OO_{\mathfrak p})=1$.
\end{proof}	

\begin{definition}
A function $A:\ZZ^s_{>0}\to\RR$ is compatible with M\"obius inversion on $X$ if there are $\beta_1,\dots,\beta_s\in\RR^s$ such that $A(\mathbf d)\ll\prod_{i=1}^sd_i^{-\beta_i}$ with $f_{(\beta_1,\dots,\beta_s)}>1$. 
\end{definition}
	
\begin{remark}
\begin{enumerate}[label=(\roman*), ref=(\roman*)]
\item
The inequality $f_\beta>1$ holds whenever $\beta_1,\dots,\beta_s>1$.
\item
If $\beta_1=\dots=\beta_s=1$, then $f_{\beta}=\tilde f$ by \eqref{eq:mu tildef}.
\item (Case $\beta_1=n_1,\dots,\beta_s=n_s$)
 As in \cite[Lemma 11.15(d)]{MR1679841}, let $f$ be the smallest positive integer such that there are $f$ rays of the fan $\Sigma$ that are not contained in a maximal cone. Then $f\geq 2$, as $X$ is proper. Moreover, $$f=\min \left \{\sharp\mathcal J:\mathcal J\subseteq\mathcal I, \mathcal J\cap\mathcal I_\sigma\neq\emptyset \ \forall\sigma\in\Sigma_{\max}\right \},$$
and Remark \ref{rem:mu} combined with Lemma \ref{lem:grouping_geom}\ref{item:grouping_f} gives
\begin{align*}
f&=\min\left \{\sum_{i\in J}n_i: J\subseteq\{1,\dots, s\}, J\cap I_\sigma\neq\emptyset \ \forall \sigma\in\Sigma_{\max}, \sharp J=\tilde f\right \}\\
 &=\min \left \{\sum_{i=1}^sn_ie_i: (e_1,\dots,e_s)\neq\mathbf 0, \mu(\mathfrak p^{e_1},\dots,\mathfrak p^{e_s})\neq0\right \}.
\end{align*}
\end{enumerate}
\end{remark}	

\section{Subvarieties}
\label{sec:subvarieties}

Here we want to count rational points or Campana points of bounded height in subvarieties of toric varieties.

From now on $\KK=\QQ$. Let $X$ be a complete smooth split toric variety as in Section \ref{sec:toricsetting}. Assume that $\rk\pic(X)\geq 2$, that is $X$ is not a projective space. Let $L$ be a semiample toric invariant $\QQ$-divisor on $X$ that satisfies \cite[Assumption 6.3]{hypmethod}. The latter holds, for example, if $L$ is ample. 

Let $g_1,\dots,g_t\in R$ be $\pic(X)$-homogeneous elements. 
Let $V\subseteq X$ be the schematic intersection of the $t$ hypersurfaces defined by $g_1,\dots,g_t$.
Let $T\subseteq X$ be the torus. Without loss of generality, we can assume that $V\cap T\neq\emptyset$. Otherwise, $V$ is contained in a complete smooth split toric subvariety $X'$ of $X$, and we can replace $X$ by $X'$.
Fix $m_{i,j}\in \ZZ_{\geq 1}$ for each $(i,j)\in\indexset$. 
Let $\mathbf m=(m_{i,j})_{(i,j)\in\indexset}$, and $\mathscr D_{\mathbf m}=\sum_{i=1}^s\sum_{j=1}^{n_i}\left(1-\frac 1{m_{i,j}}\right)\mathscr D_{i,j}$. 
Let $\mathscr V$ be the Zariski closure of $V$ in $\mathscr X$. 
We define the intersection multiplicity $n_v(\mathscr D_i|_{\mathscr V},\mathbf x)$ of a point $\mathbf x:\spec \OO_{\KK}\to\mathscr V$ with $\mathscr D_i|_{\mathscr V}$ at a place $v$ of $\KK$ is defined as the colength of the ideal of the fiber product of $\spec\OO_{\KK}\times_{\mathscr V}\mathscr D_i|_{\mathscr V}$ after base change to the completion of $\OO_{\KK}$ at $v$.
This definition coincides with the one in \cite[\S3]{PSTVA} whenever $\mathscr V$ is regular.
Let $\left(\mathscr V,\mathscr D_{\mathbf m}|_{\mathscr V}\right)(\ZZ)$ be the set of Campana $\ZZ$-points on the Campana orbifold  $\left(\mathscr V,\mathscr D_{\mathbf m}|_{\mathscr V}\right)$ as in \cite[Definition 3.4]{PSTVA}.

Let $N_V(B)$ be the number of points in $\left(\mathscr V,\mathscr D_{\mathbf m}|_{\mathscr V}\right)(\ZZ)\cap T(\QQ)$ of height $H_L$ at most $B$. If $m_{i,j}=1$ for all $(i,j)\in\indexset$, then $N_V(B)$ is the set of $\QQ$-rational points on $V\cap T$ of height $H_L$ at most $B$.

For $i\in\{1,\dots,s\}$ and $\mathbf x\in \mathscr Y(\ZZ)$, let $y_i=\sup_{1\leq j\leq n_{i}}|x_{i,j}|$. For $\sigma\in\Sigma_{\max}$, write $L(\sigma)=\sum_{i=1}^s\sum_{j=1}^{n_i}\alpha_{i,j,\sigma}D_{i,j}$ and $\alpha_{i,\sigma}=\sum_{j=1}^{n_i}\alpha_{i,j,\sigma}$ for all $i\in\{1,\dots,s\}$.
Then by \cite[Proposition 6.10]{hypmethod} and Lemma \ref{lem:height},
$$H_L(\mathbf x)=\sup_{\sigma\in \Sigma_{\max}}\prod_{i=1}^s y_i^{\alpha_{i,\sigma}}.$$
By construction, $\left(\mathscr V,\mathscr D_{\mathbf m}|_{\mathscr V}\right)(\ZZ)=\left(\mathscr X,\mathscr D_{\mathbf m}\right)(\ZZ)\cap V(\QQ)$. We use the torsor parameterization of $\left(\mathscr X,\mathscr D_{\mathbf m}\right)(\ZZ)$ from \cite[\S6.4]{hypmethod}.
For $B>0$ and $\mathbf d\in(\ZZ_{>0})^{s}$, 
	let $A(B,\mathbf d)$ be the set of points
	$\mathbf x=(x_{i,j})_{1\leq i\leq s,1\leq j\leq n_i}\in(\ZZ_{\neq0})^{\indexset}$ such that 
	\begin{gather}
	H(\mathbf x)\leq B, \label{cond:height}\\
	d_i\mid x_{i,j} \ \forall i\in\{1,\dots,s\}, \forall j\in\{1,\dots,n_i\}, \label{cond:divisibility}\\
	x_{i,j} \text{ is } m_{i,j}\text{-full} \ \forall i\in\{1,\dots,s\}, \forall j\in\{1,\dots,n_i\}, \label{cond:m-full}\\ 
	g_1=\dots=g_t=0.
	\label{cond:equations}
	\end{gather}	
	We observe that $A(B, \mathbf d)$ is a finite set by \cite[Lemma 6.11]{hypmethod}.	
	Then
	\begin{equation}
	\label{eq:moebius_inversion}
 	N_V(B)=\frac 1{2^r}\sum_{\mathbf d\in(\ZZ_{>0})^{s}}\mu(d)\sharp A(B, \mathbf d)
	\end{equation}
	by Lemma \ref{lem:grouping_coprimality} and the definition of $\mu$ in Section \ref{subsec:mobius}.

	Write
	$$
	\sharp A(B,\mathbf d)=\sum_{\substack{y_1,\dots,y_s\in\ZZ_{>0}
	\\ \prod_{i=1}^s y_i^{\alpha_{i,\sigma}}\leq B, \ \forall \sigma\in\Sigma_{\max}}}f_{\mathbf d}(y_1,\dots,y_s),
	$$
	where 
	\begin{align*}
	f_{\mathbf d}(y_1,\dots,y_s)=\sharp\{\mathbf x\in(\ZZ_{\neq 0})^{\indexset}: 
	\eqref{cond:divisibility},\eqref{cond:m-full},\eqref{cond:equations},
	y_i=\sup_{1\leq j\leq n_{i}}|x_{i,j}| \ \forall i\in\{1,\dots,s\} \}.
	\end{align*}

	 Let $$F_{\mathbf d}(B_1,\dots,B_s)=\sum_{1\leq y_i\leq B_i,1\leq i\leq s}f_{\mathbf d}(y_1,\dots,y_s).$$

\begin{lemma}\label{lem:counting}
Assume that 
\begin{equation}
F_{\mathbf d}(B_1,\dots,B_s)=C_{M,\mathbf d}\prod_{i=1}^sB_i^{\varpi_i} + O\left(C_{E,\mathbf d} \left(\min_{1\leq i\leq s}B_i\right)^{-\epsilon}\prod_{i=1}^sB_i^{\varpi_i}\right)
\end{equation}
with $C_{M,\mathbf d}, C_{E,\mathbf d}, \varpi_1,\dots,\varpi_s,\epsilon>0$ such that $C_{M,\mathbf d}$, $C_{E,\mathbf d}$ are compatible with M\"obius inversion on $X$ as functions of the variables $\mathbf d$.

Let $a$ be the maximal value of $\sum_{i=1}^s\varpi_iu_i$ on the polytope $\mathcal P\subseteq\RR^s$ defined by
$$
\sum_{i=1}^s\alpha_{i,\sigma}u_i\leq 1 \ \forall \sigma\in\Sigma_{\max}, \qquad u_i\geq 0 \ \forall i\in\{1,\dots,s\}.
$$
Let $F$ be the face of $\mathcal P$ where $\sum_{i=1}^s\varpi_iu_i=a$. Let $k$ be the dimension of $F$. 

\begin{enumerate}[label=(\roman*), ref=(\roman*)]
\item 
If $F$  is not contained in a coordinate hyperplane of $\RR^s$,
then 
$$N_V(B)=cB^a(\log B)^k +O(B^a(\log B)^{k-1}(\log\log B)^s),$$
where 
$k$ is the dimension of $F$, and
\begin{equation}
\label{eq:leading constant}
c=(s-1-k)! c_{\mathcal P}2^{-r}\sum_{\mathbf d\in\ZZ_{>0}^s}\mu(\mathbf d)C_{M,\mathbf d}. 
\end{equation}
Here, $c_{\mathcal P}=\lim_{\delta\to 0}\delta^{k+1-s}\meas_{s-1}(H_\delta\cap\mathcal P)$, where $H_\delta\subseteq\RR^s$ is the hyperplane defined by $\sum_{i=1}^s\varpi_iu_i=a-\delta$, and $\meas_{s-1}$ is the $(s-1)$-dimensional measure on $H_\delta$ given by $\prod_{1\leq i\leq s, i\neq\tilde i}(\varpi_i\mathrm du_i)$ for any choice of $\tilde i\in\{1,\dots,s\}$.

\item If $L$ is ample, then
$$
a=\inf\left\{t\in\RR: t[L]-\left [\sum_{i=1}^s\varpi_i\divclass_i\right ] \text{ is effective}\right\},
$$
and $k+1$ is the codimension of the minimal face of the effective cone of $X$ containing $a[L]-[\sum_{i=1}^s\varpi_i\divclass_i]$.

\item If $[L]=\sum_{i=1}^s\varpi_i\divclass_i$ is ample, then the face $F$ is not contained in a coordinate hyperplane, $a=1$ and $k=\rk\pic(X)-1$.

\end{enumerate}
\end{lemma}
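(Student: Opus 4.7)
The plan is to combine the M\"obius inversion \eqref{eq:moebius_inversion} with the hyperbola method of \cite{hypmethod}, feeding the hypothesized box-count asymptotic for $F_{\mathbf d}$ into the latter. For each $\mathbf d$, the count $\sharp A(B,\mathbf d)=\sum_{\mathbf y}f_{\mathbf d}(\mathbf y)$ ranges over $\{\mathbf y\in\ZZ_{>0}^s:\sup_{\sigma\in\Sigma_{\max}}\prod_{i}y_i^{\alpha_{i,\sigma}}\leq B\}$, whose height condition is precisely the maximum-of-monomials type that \cite{hypmethod} is designed for. I would apply the hyperbola method with the polytope $\mathcal P$ and the main-term input $C_{M,\mathbf d}$, and then resum against $\mu(\mathbf d)/2^r$.

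For part (i), feeding the assumed expansion of $F_{\mathbf d}$ into the hyperbola method produces an asymptotic of the shape
\begin{equation*}
\sharp A(B,\mathbf d)=C_{M,\mathbf d}\,c_{\mathcal P}\,(s-1-k)!\,B^a(\log B)^k+O\bigl((C_{M,\mathbf d}+C_{E,\mathbf d})B^a(\log B)^{k-1}(\log\log B)^{s-1}\bigr),
\end{equation*}
where $a$, the face $F$ of dimension $k$, and the measure $c_{\mathcal P}$ are the LP data of $\mathcal P$; the hypothesis that $F$ avoids the coordinate hyperplanes ensures that the full dimension of $F$ appears as the exponent of $\log B$. After multiplying by $\mu(\mathbf d)/2^r$ and summing, the compatibility of $C_{M,\mathbf d}$ and $C_{E,\mathbf d}$ with M\"obius inversion together with Lemma \ref{lem:mobius}\ref{item:mobius convergence} force absolute convergence of $\sum_{\mathbf d}\mu(\mathbf d)C_{M,\mathbf d}$ and yield the main term \eqref{eq:leading constant}. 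A standard dyadic split on $\mathbf d$, using the two error bounds for small and large $d_i$, gives the final $(\log\log B)^s$ loss in the error.

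For part (ii), I would read off $a$ and $k$ from linear-programming duality between $\mathcal P$ and the effective cone $\eff(X)=\cone(\divclass_1,\dots,\divclass_s)\subseteq\pic(X)\otimes\RR$. By Lemma \ref{lem:grouping_geom}\ref{item:grouping_geom_Lsigma} the constraint $\sum_i\alpha_{i,\sigma}u_i\leq 1$ records the coordinates of $[L]$ in the basis $\{\divclass_i:i\in I_\sigma\}$ of $\pic(X)$, and together the constraints over all $\sigma\in\Sigma_{\max}$ cut out exactly the $\mathbf u\in\RR_{\geq 0}^s$ for which $[L]-\sum_iu_i\divclass_i\in\eff(X)$. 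Maximizing $\sum_i\varpi_iu_i=t$ on $\mathcal P$ therefore gives $a=\inf\{t:t[L]-[\sum_i\varpi_i\divclass_i]\in\eff(X)\}$, and the dimension $k$ of the optimal face equals one less than the codimension of the minimal face of $\eff(X)$ meeting the ray through $a[L]-\sum_i\varpi_i\divclass_i$. Part (iii) is then immediate: if $[L]=\sum_i\varpi_i\divclass_i$ is ample, the infimum is attained at $t=1$, the minimal face is $\{0\}$, of codimension $\rk\pic(X)$, and ampleness places the optimizer in the interior of $\RR_{\geq 0}^s$, so $F$ avoids every coordinate hyperplane.

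The hard part will be the bookkeeping in the $\mathbf d$-sum: balancing the main-term series, which converges only because of $f_\beta>1$, against the error term $O(C_{E,\mathbf d}(\min_iB_i)^{-\epsilon}\prod_iB_i^{\varpi_i})$, which only saves when some $d_i$ is large. This is the step most sensitive to the precise form of the hyperbola method in \cite{hypmethod}, since it is that shape which yields the $(\log\log B)^s$ factor in place of a worse loss.
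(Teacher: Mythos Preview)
Your treatment of part (i) is essentially the paper's: apply \cite[Theorem 1.1]{hypmethod} to each $\sharp A(B,\mathbf d)$ and sum against $\mu(\mathbf d)/2^r$, with absolute convergence from Lemma~\ref{lem:mobius}\ref{item:mobius convergence}. The paper does not perform a dyadic split on $\mathbf d$; the $(\log\log B)^s$ already comes out of \cite[Theorem 1.1]{hypmethod}, and one simply sums $\sum_{\mathbf d}|\mu(\mathbf d)|C_{E,\mathbf d}$ directly. So your last paragraph is unnecessary worry, not a gap.

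There is, however, a genuine gap in your argument for part (ii). The claim that the constraints $\sum_i\alpha_{i,\sigma}u_i\leq 1$ over all $\sigma\in\Sigma_{\max}$ cut out exactly $\{\mathbf u\in\RR_{\geq 0}^s:[L]-\sum_iu_i\divclass_i\in\eff(X)\}$ is false. Take $X=\PP^1\times\PP^1$, so $s=r=2$, $n_1=n_2=2$, and $I_\sigma=\{1,2\}$ for every $\sigma$. For $L=a_1D_{1,1}+a_2D_{2,1}$ one computes $\alpha_{1,\sigma}=a_1$, $\alpha_{2,\sigma}=a_2$ for every $\sigma$, so $\mathcal P=\{u_1,u_2\geq 0:a_1u_1+a_2u_2\leq 1\}$ is a triangle. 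On the other hand $[L]-u_1\divclass_1-u_2\divclass_2\in\eff(X)=\RR_{\geq 0}\divclass_1+\RR_{\geq 0}\divclass_2$ iff $u_1\leq a_1$ and $u_2\leq a_2$, giving the rectangle $[0,a_1]\times[0,a_2]$. The two polytopes differ, so your ``therefore'' does not follow. (The value of $a$ happens to coincide in this example, but not for your stated reason, and the identification of $k$ from the face correspondence certainly needs a correct description of $\mathcal P$.)

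The paper avoids this by lifting to the larger polytope $\widetilde P\subseteq\RR^{\indexset}$ and invoking \cite[Lemma 6.7, \S6.5.1, Proposition 6.13]{hypmethod}, which handle the LP duality and the face/codimension correspondence in that setting; one then restricts to $\RR^s$ and to $\RR^r$. Your part (iii) inherits the same issue: ``ampleness places the optimizer in the interior of $\RR_{\geq 0}^s$'' is not a proof that the \emph{face} $F$ avoids every coordinate hyperplane. The paper instead identifies $F$ with a slice of the linear subspace $\widetilde H\subseteq\RR^s$ dual to $\bigoplus_i\RR\divclass_i\twoheadrightarrow\pic(X)_\RR$ and argues as in \cite[Lemma 6.7(ii)]{hypmethod}.
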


\begin{proof}
\begin{enumerate}[label=(\roman*), ref=(\roman*)]
\item
Let $t_i=\varpi_iu_i$ for all $i\in\{1,\dots,s\}$. By the assumptions on $L$, the polytope $\mathcal P$ is bounded and nondegenerate by \cite[Remark 6.2]{hypmethod}. 
Applying \cite[Theorem 1.1]{hypmethod} to $\sharp A(B,\mathbf d)$ gives
$$
N_V(B)=c B^a(\log B)^k + O\left(B^a(\log B)^{k-1}(\log\log B)^s \sum_{\mathbf d\in(\ZZ_{>0})^{s}}\mu(d) C_{E,\mathbf d} \right).
$$
The sums $\sum_{\mathbf d\in(\ZZ_{>0})^{s}}$ in the leading constant $c$ and in the error term converge absolutely by Lemma \ref{lem:mobius} as $C_{M,\mathbf d}$, $C_{E,\mathbf d}$ are compatible with M\"obius inversion on $X$.

\item Let 
	 $$
	 \RR^r\hookrightarrow\RR^s\hookrightarrow\RR^{\indexset}
	 $$
	 be the sequence of injective linear maps dual to 
	 $$d:\bigoplus_{(i,j)\in\indexset}D_{i,j}\ZZ\twoheadrightarrow\bigoplus_{i=1}^s\divclass_i\ZZ\twoheadrightarrow\pic(X).	$$
	 Here, $$\RR^s\hookrightarrow\RR^{\indexset}, \qquad \sum_{i=1}^s u_ie_i\mapsto\sum_{i=1}^s\sum_{j=1}^{n_i}u_ie_{i,j},$$ where $\{e_1,\dots,e_s\}$ denotes the dual basis to $\{\divclass_1,\dots,\divclass_s\}$, and $\{e_{i,j}:(i,j)\in\indexset\}$ denotes the dual basis to $\{D_{i,j}:(i,j)\in\indexset\}$.
	 Let $\widetilde P$ be the polytope defined by 
	 $$
	  \sum_{(i,j)\in\indexset}\alpha_{i,j,\sigma}u_{i,j}\leq 1 \quad \forall \sigma\in\Sigma_{\max}, \qquad u_{i,j}\geq 0 \quad \forall (i,j)\in\indexset.
	 $$
	 Then $\widetilde P\cap \RR^s=P$ and $\left.\sum_{(i,j)\in\indexset}\frac{\varpi_i}{n_i}u_{i,j}\right|_{P}=\sum_{i=1}^s\left(\sum_{j=1}^{n_i}\frac{\varpi_i}{n_i}\right)u_i$. 
	 By \cite[Lemma 6.7]{hypmethod}, the face $F$ of $\widetilde P$ where the maximal value of the function
	  \begin{equation}
	 \sum_{(i,j)\in\indexset}\frac{\varpi_i}{n_i}u_{i,j} \label{eq:maximizing function}
	 \end{equation}
	 is attained is contained in $\widetilde P\cap \RR^r$, and hence also in $P$.
	 Then $a$ is the maximal value of the function \eqref{eq:maximizing function} on $P$. The dual linear programming problem is given by minimizing $\sum_{\sigma\in\Sigma_{\max}}\lambda_\sigma$ on the polytope given by
	 $$
	 \sum_{\sigma\in\Sigma_{\max}}\alpha_{i,j,\sigma}\lambda_\sigma\geq \frac{\varpi_i}{n_i} \quad \forall (i,j)\in\indexset, \qquad \lambda_\sigma\geq 0,\ \forall \sigma\in\Sigma_{\max}.
	 $$
	 The arguments in \cite[\S 6.5.1]{hypmethod} show that
	  $a$ is the smallest real number such that $a[L]-\sum_{i=1}^s\sum_{j=1}^{n_i}\frac{\varpi_i}{n_i}\divclass_i$ is effective. 
	  As in \cite[Proposition 6.13]{hypmethod}, the smallest face of $\eff(X)$ that contains $a[L]-\sum_{i=1}^s\varpi_i\divclass_i$ is dual to the cone generated by $F$ in $\RR^r$, and the latter is defined by $a\sum_{i=1}^s\alpha_{i,\sigma}u_i-\sum_{i=1}^s\varpi_iu_i=0$ for any $\sigma\in\Sigma_{\max}$ such that $F\subseteq \{\sum_{i=1}^s\alpha_{i,j,\sigma}u_{i,j}=1\}$.
	  Thus the minimal face of $\eff(X)$ containing $a[L]-[\sum_{i=1}^s\varpi_i\divclass_i]$ has codimension $k+1$.

\item We argue as in the proof of \cite[Lemma 6.7(ii)]{hypmethod}. Let $\widetilde H\subseteq \RR^s$ be the inclusion dual to the surjection $\bigoplus_{i=1}^s\RR\divclass_i\to\pic(X)\otimes_{\ZZ}\RR$. Then $\sum_{i=1}^s\alpha_{i,\sigma}u_i=\sum_{i=1}^s\varpi_iu_i$ for all $\mathbf u\in \widetilde H$ and all $\sigma\in\Sigma_{\max}$. Thus $\mathcal P\cap\widetilde H$ is the set of elements $\mathbf u$ of $\widetilde H$ such that $u_1,\dots,u_s\geq 0$ and $\sum_{i=1}^s\varpi_iu_i\leq 1$. Since $F\subseteq\widetilde H$ by \cite[Lemma 6.7(i)]{hypmethod}, we have $F=\widetilde H\cap\{\sum_{i=1}^s\varpi_iu_i= 1\}$. As in in the proof of \cite[Lemma 6.7(ii)]{hypmethod} we conclude that $F$ is not contained in a coordinate hyperplane of $\RR^s$.
\end{enumerate}
\end{proof}

\section{Rational points on linear complete intersections}	 
\label{sec:linear}
\begin{proof}[Proof of Theorem \ref{thm:linear}]
For $1\leq i\leq s$ and $1\leq l\leq t_i$, let $g_{i,l}\in R$ be a linear polynomial defining $H_{i,j}$. 
Then
$$
g_{i,l}=\sum_{j=1}^{n_i} c_{i,j,l}x_{i,j}, \quad l\in\{1,\dots,t_i\},
$$
with $c_{i,j,l}\in\ZZ$, and $g_{i,1},\dots,g_{i,t_i}$ are linearly independent for all $i\in\{1,\dots,s\}$.
Let $m_{i,j}=1$ for all $(i,j)\in\indexset$.
Then 
	$$ F_{\mathbf d}(B_1,\dots,B_s)
	=\prod_{i=1}^sF_{i,d_i}(B_i),
	$$
	where for $i\in\{1,\dots,s\}$, $d\in\ZZ_{>0}$ and $B>0$,
	\begin{multline*}
	F_{i,d}(B)=\sharp\{(x_{i,1},\dots,x_{i,n_i})\in(\ZZ_{\neq 0})^{n_i}: 
	 \sup_{1\leq j\leq n_{i}}|x_{i,j}|\leq B, \\
	 d\mid x_{i,j}  \ \forall j\in\{1,\dots,n_i\}, \
	  g_{i,1}=\dots=g_{i,t_i}=0\}.
	 \end{multline*}

For $i\in\{1,\dots,s\}$, let $W_i\subseteq \RR^{n_i}$ be the linear space defined by $g_{i,1}=\dots=g_{i,t_i}=0$, and let $\Lambda_i\subseteq W_i$ be restriction of the standard lattice $\ZZ^{n_i}\subseteq\RR^{n_i}$ to $W_i$. Then by \cite[Lemma 11.10.15]{BombieriGubler} for every $T\geq 1$,
\begin{gather*}
\sharp (\ZZ^{n_i}\cap[-T,T]^{n_i}\cap W_i)=\sharp (\Lambda_i\cap(T([-1,1]^n\cap W_i)) \\
=T^{n_i-t_i}\frac{\meas_{n_i-t_i}([-1,1]^{n_i}\cap W_i)}{\det\Lambda_i} +O\left(T^{n_i-t_i-1} \right),
\end{gather*}
where $\meas_{n_i-t_i}$ is the $(n_i-t_i)$-dimensional measure induced by the Lebesgue measure on $\RR^{n_i}$. 
Let
$$
c_i=\frac{\meas_{n_i-t_i}([-1,1]^{n_i}\cap W_i)}{\det\Lambda_i}.
$$
Then applying this estimate with $T=B/d$ gives
$$F_{i,d}(B)=c_i(B/d)^{n_i-t_i} +O((B/d)^{n_i-t_i-1})$$ 
whenever $d\leq B$. If $d>B$, then $F_{i,d}(B)=0$ and the same estimate holds.
Hence, for $\delta>0$,
\begin{multline*}
F_{\mathbf d}(B_1,\dots,B_s)=C_{M,\mathbf d} \prod_{i=1}^sB_i^{n_i-t_i} 
+O\left( C_{E,\mathbf d}\left(\prod_{i=1}^sB_i^{n_i-t_i}\right)\left(\min_{1\leq i\leq s}B_i\right)^{-\delta}\right),
\end{multline*}
where
\begin{gather}
\label{eq:linear C_M}
C_{M,\mathbf d}=\prod_{i=1}^s\frac{c_i}{d_i^{n_i-t_i}},\\
C_{E,\mathbf d}=\prod_{i=1}^sd_i^{-(n_i-t_i)+\delta}.
\end{gather}

We show that for $\delta>0$ sufficiently small, the assumptions of Lemma \ref{lem:counting} are satisfied. 
Since $n_i-t_i\geq 2$ for all $i\in\{1,\dots,s\}$ such that $t_i\neq 0$, if $f_{(n_1-t_1,\dots,n_s-t_s)}<2$, by Remark \ref{rem:mu}\ref{item: mu tildef} there is $\tilde i\in\{1,\dots,s\}$ such that $t_{\tilde i}=0$, $\tilde i\in I_\sigma$ for all $\sigma\in\Sigma_{\max}$, and $n_{\tilde i}=1$. Then $\rho_{\tilde i,1}$ is not contained in any maximal cone of $\Sigma$, contradicting the fact that $X$ is proper. Thus
 $f_{(n_1-t_1,\dots,n_s-t_s)}\geq 2$. By definition and by Remark \ref{rem:mu}\ref{item:mu geq2}, $f_{(n_1-t_1-\delta,\dots,n_s-t_s-\delta)}\geq f_{(n_1-t_1,\dots,n_s-t_s)}-s\delta$.

Since $V$ is a smooth complete intersection of smooth divisors, by adjunction \cite[Proposition 16.4]{MR1225842} we have $K_V=K_X+\sum_{i=1}^s\sum_{l=1}^{t_i}[H_{i,l}]$.
Since 
\begin{equation*}
\sum_{i=1}^s(n_i-t_i)\divclass_i=-[K_X]-\sum_{i=1}^st_i\divclass_i=-[K_X]-\sum_{i=1}^s\sum_{l=1}^{t_i}[H_{i,l}],
\end{equation*}
Lemma \ref{lem:counting}  gives
	$$
N_V(B)=cB(\log B)^{b-1} +O(B(\log B)^{b-2}(\log\log B)^s),
$$
where $b=\rk\pic(X)$, and $c$ is defined in \eqref{eq:leading constant} with $k=b-1$, $C_{M,\mathbf d}$ given by \eqref{eq:linear C_M} and $\varpi_i=n_i-t_i$ for $i\in\{1,\dots,s\}$.  The restriction $\pic(X)\to\pic(V)$ is an isomorphism as $t_i\leq n_i-2$ for all $i\in\{1,\dots,s\}$. The leading constant $c$ is positive by Lemma \ref{lem:mobius}\ref{item:mobius positive}.
\end{proof}

\section{Bihomogeneous hypersurfaces}
\label{sec:bihomog}
\begin{proof}[Proof of Theorem \ref{thm:bihomog}]
In the setting of Theorem \ref{thm:bihomog}, the hypersurfaces $H_1,\dots,H_t$ are defined by bihomogeneous polynomials $g_1, \dots,g_t$ of degree $(e_1,e_2)$ in the two sets of variables $\{x_{1,j}:1\leq j\leq n_1\}$ and $\{x_{2,j}:1\leq j\leq n_2\}$. Let $m_{i,j}=1$ for all $(i,j)\in\indexset$.

 We will apply \cite[Theorem 4.4]{Sch} with $R=t$, $F_i=g_i$, $\mathscr B_i=[-1,1]^{n_i}$, $P_i=B_i/d_i$, $d_i=e_i$.
In order to apply  \cite[Theorem 4.4]{Sch} we need to restrict the points to an open set. 	 
	Let $U\subseteq \A^{n_1+n_2}$ be the open set in \cite[Theorem 4.4]{Sch}. Since the complement of $U$ is the zero set of homogeneous polynomials by \cite[Theorems 4.1, 4.2]{Sch}, the set $W:=\pi(\{\mathbf x\in Y: (x_{1,1},\dots, x_{1,n_1},x_{2,1},\dots,x_{2,n_2})\in U\})$ is an open subset of $X$. 
	Then $$N_{V,W}(B)=\frac 1{2^r}\sum_{\mathbf d\in(\ZZ_{>0})^s}\mu(\mathbf d)\sharp A^W(B,\mathbf d),$$ with
	$$A^W(B,\mathbf d)=\sum_{\substack{y_1,\dots,y_s\in\ZZ_{>0}\\ \prod_{i=1}^s y_i^{\alpha_{i,\sigma}}\leq B,\ \forall \sigma\in\Sigma_{\max}}}f^W_{\mathbf d}(y_1,\dots,y_s)$$ and 
	\begin{multline*}
	f^W_{\mathbf d}(y_1,\dots,y_s)=\sharp\{\mathbf x\in(\ZZ_{\neq 0})^{\indexset}: 
	(x_{1,1},\dots,x_{1,n_1}, x_{2,1},\dots,x_{2,n_2}) \in U(\QQ), \\
	\eqref{cond:divisibility},\eqref{cond:m-full},\eqref{cond:equations},
	y_i=\sup_{1\leq j\leq n_{i}}|x_{i,j}| \ \forall i\in\{1,\dots,s\} \}.
	\end{multline*}
	Let $$F^W_{\mathbf d}(B_1,\dots,B_s)=\sum_{1\leq y_i\leq B_i,1\leq i\leq s}f^W_{\mathbf d}(y_1,\dots,y_s).$$
	Then $$ F^W_{\mathbf d}(B_1,\dots,B_s)
	=\widetilde F^W_{d_1,d_2}(B_1,B_2)\prod_{i=3}^sF_{i,d_i}(B_i),
	$$
	where 	
	\begin{multline*}
	\widetilde F_{d_1,d_2}^W=\sharp\{(x_{1,1},\dots,x_{1,n_1}, x_{2,1},\dots,x_{2,n_2})\in(\ZZ_{\neq 0})^{n_1+n_2}\cap U(\QQ): \\
	 \sup_{1\leq j\leq n_{i}}|y_{i,j}|\leq B_i/d_i \ \forall i\in\{1,2\}, \ g_1=\dots=g_t=0
	  \}.
	  \end{multline*}
	 and for $d\in\ZZ_{>0}$ and $B>0$,
	 \begin{multline*}
	F_{i,d}(B)=\sharp\{(x_{i,1},\dots,x_{i,n_i})\in(\ZZ_{\neq 0})^{n_i}: 
	 \sup_{1\leq j\leq n_{i}}|x_{i,j}|\leq B, \\
	 d\mid x_{i,j} \ \forall j\in\{1,\dots,n_i\}
	 \}.
	 \end{multline*}
	 
	  If $d\leq B_i$, then
	 $$
	 F_{i,d}(B)=2^{n_i}(B/d)^{n_i}+O((B/d)^{n_i-\delta})
	 $$
	 with $0<\delta\leq 1$. If $d>B$, then  $F_{i,d}(B)=0$, and the same estimate holds.
	
	To compute $\widetilde F^W_{d_1,d_2}(B_1,B_2)$, write $x_{i,j}=d_iy_{i,j}$ for all $(i,j)\in\indexset$. Then
	\begin{multline*}
	\widetilde F^W_{d_1,d_2}(B_1,B_2)=\sharp\{(y_{1,1},\dots,y_{1,n_1}, y_{2,1},\dots,y_{2,n_2})\in(\ZZ_{\neq 0})^{n_1+n_2}\cap U(\QQ): \\
	 \sup_{1\leq j\leq n_{i}}|y_{i,j}|\leq B_i/d_i \ \forall i\in\{1,2\}, \ g_1=\dots=g_t=0
	  \},
	 \end{multline*}
	 as the complement of $U$ is the zero set of homogeneous polynomials by \cite[Theorems 4.1, 4.2]{Sch}.
	 Let $V_i^*\subseteq\A^{n_1+n_2}$ be the locus where the matrix $\left(\frac{\partial g_l}{\partial x_{i,j}}\right)_{\substack{1\leq l\leq t\\1\leq j\leq n_i}}$ does not have full rank.
	 If $n_1+n_2>\dim V_1^*+\dim V_2^* + 3\cdot 2^{e_1+e_2}e_1e_2t^3$,  
	 then by \cite[Theorem 4.4]{Sch}  there is $\delta>0$ such that
	 \begin{multline*}
	 \widetilde F^W_{d_1,d_2}(B_1,B_2)
	 =C \prod_{i=1}^2\left(B_i/d_i\right)^{n_i-te_i}+O\left( \left(\min_{i=1,2}B_i/d_i\right)^{-\delta}\prod_{i=1}^2(B_i/d_i)^{n_i-te_i}\right)\\
	 =C \prod_{i=1}^2\left(B_i/d_i\right)^{n_i-te_i}+O\left( \left(\prod_{i=1}^2d_i^{-(n_i-te_i)+\delta}\right)\left(\min_{i=1,2}B_i\right)^{-\delta}\prod_{i=1}^2B_i^{n_i-te_i}\right)
	 \end{multline*}	 
	 with $C\in\RR_{\geq 0}$, and $C>0$ whenever $V$ has nonsingular $\QQ_v$-points for all places $v$ of $\QQ$. 
Thus	 
\begin{multline*}
F^W_{\mathbf d}(B_1,\dots,B_s) = C_{M,\mathbf d} B_1^{n_1-te_1}B_2^{n_2-te_2}\prod_{i=3}^sB_i^{n_i} \\+O\left(C_{E,\mathbf d}\left(\min_{1\leq i\leq s}B_i\right)^{-\delta}B_1^{n_1-te_1}B_2^{n_2-te_2}\prod_{i=3}^sB_i^{n_i}\right),
\end{multline*}
	 where 
	 \begin{gather}
	 \label{eq:bihom C_M}
	 C_{M,\mathbf d}=C d_1^{-(n_1-te_1)}d_2^{-(n_2-te_2)} \prod_{i=3}^sd_i^{-n_i},\\
	 C_{E,\mathbf d}= d_1^{-(n_1-te_1)+\delta}d_2^{-(n_2-te_2)+\delta} \prod_{i=3}^sd_i^{-n_i+\delta}.
	 \end{gather}
	 
	 Recall that $n_i - te_i\geq 2$ for $i\in\{1,2\}$. For $\delta>0$ sufficiently small, if $$f_{n_1-te_1-\delta, n_2-te_2-\delta, n_3-\delta,\dots,n_s-\delta}\leq 1,$$ then by Remark \ref{rem:mu}\ref{item: mu tildef} there is $\tilde i\in\{3,\dots,s\}$ such that $\tilde i\in I_\sigma$ for all $\sigma\in\Sigma_{\max}$ and $n_{\tilde i}=1$. Then the ray $\rho_{\tilde i,1}$ is contained in no  maximal cone of $\Sigma$, contradicting the fact that $X$ is proper. 
	 
	Since $V$ is a smooth complete intersection, the adjunction formula \cite[Proposition 16.4]{MR1225842}  gives $K_V=K_X+H_1+\dots+H_t$.	 
	Let $\varpi_i=n_i-te_i$ for $i\in\{1,2\}$, and $\varpi_i=n_i$ for $i\in\{3,\dots,s\}$.
	 Since \begin{equation*}
	\sum_{i=1}^s\varpi_i\divclass_i=-[K_X]-t(e_1\divclass_1+e_2\divclass_2)=-[K_X]-[H_1+\dots+H_t],
	\end{equation*}
	Lemma \ref{lem:counting}  applied to $F^W_{\mathbf d}(B_1,\dots,B_s)$ and $N_{V,W}(B)$ gives 
	$$
N_{V,W}(B)=cB(\log B)^{b-1} +O(B^a(\log B)^{b-2}(\log\log B)^s)
$$
for $B>0$, where $b=\rk\pic(X)$, and $c$ is defined in \eqref{eq:leading constant} with $k=b-1$,  $C_{M,\mathbf d}$ given by \eqref{eq:bihom C_M}.
Moreover, the restriction $\pic(X)\to\pic(V)$ is an isomorphism, as $t\leq\min\{n_1,n_2\}-2$. By Lemma \ref{lem:mobius}\ref{item:mobius positive} the leading constant $c$ is positive if $V(\QQ_v)\neq\emptyset$ for all places $v$ of $\QQ$ as $C$ is positive under the same conditions by \cite[Theorems 4.3 and 4.4]{Sch}.
\end{proof}

\section{Campana points on certain diagonal complete intersections}
\label{sec:Campana}
\begin{proof}[Proof of Theorem \ref{thm:Campana}]

In the setting of Theorem \ref{thm:Campana}, the hypersurfaces $H_1,\dots,H_t$ are defined by homogeneous diagonal polynomials $g_1,\dots,g_t\in R$ with $\deg g_i=e_i\divclass_i$ in $\pic(X)$ for all $i\in\{1,\dots,t\}$.
Then
$$
g_{i}=\sum_{j=1}^{n_i} c_{i,j}x_{i,j}^{e_i}, 
$$
with $c_{i,j}\in\ZZ_{\neq0}$, and 
	$$ F_{\mathbf d}(B_1,\dots,B_s)
	=\prod_{i=1}^sF_{i,d_i}(B_i),
	$$
	where for $i\leq t$,
	\begin{multline*}
	F_{i,d}(B)=\sharp\{(x_{i,1},\dots,x_{i,n_i})\in(\ZZ_{\neq 0})^{n_i}: 
	 d\mid x_{i,j}, \
	 x_{i,j} \text{ is } m_{i,j}\text{-full} \ \forall j\in\{1,\dots,n_i\}, \\
	 \sup_{1\leq j\leq n_{i}}|x_{i,j}|\leq B, \
	  g_i=0\},
	 \end{multline*}
and for $i>t$,
\begin{multline}
	F_{i,d}(B)=\sharp\{(x_{i,1},\dots,x_{i,n_i})\in(\ZZ_{\neq 0})^{n_i}: 
	 \sup_{1\leq j\leq n_{i}}|x_{i,j}|\leq B, \
	 d\mid x_{i,j}, \\ x_{i,j} \text{ is } m_{i,j}\text{-full} \ \forall j\in\{1,\dots,n_i\}\}.
	 \label{eq:campana coord}
	 \end{multline}

For $i\leq t$, we estimate $F_{i,d}(B)$ via the following lemma.

\begin{lemma}\label{lem:diagonal}
Let $n,e, m_1,\dots,m_n\in\ZZ_{>0}$. Let $c_1,\dots,c_n\in\ZZ_{\neq 0}$. Let $d$ be a squarefree positive integer. 
Assume that $n\geq 2$ and $2\leq m_1\leq \dots\leq m_n$.
\begin{enumerate}
\item If $e=1$, assume that $\sum_{j=1}^n\frac 1{m_j}>3$, and $\sum_{j=1}^{n-1}\frac 1{em_j(em_j+1)}\geq 1$. 
\item If $e\geq 2$ assume that $\sum_{j=1}^n\frac 1{em_j}>3$, $\sum_{j=1}^n\frac 1{2s_0(em_j)}>1$, where
$$
s_0(m)=\min\left\{2^{m-1},\frac 12m(m-1)+\lfloor\sqrt{2m+2}\rfloor\right\}, \ m\in\ZZ_{\geq0}.
$$
\end{enumerate}
For $B>0$, let
\begin{multline*}F_{d}(B)=\sharp\{(x_{1},\dots,x_{n})\in(\ZZ_{\neq 0})^{n}: 
	 d\mid x_{j}, \ 
	 x_{j} \text{ is } m_{j}\text{-full} \ \forall j\in\{1,\dots,n\}, \\
	 \sup_{1\leq j\leq n}|x_{j}|\leq B, \
	  \sum_{j=1}^{n} c_{j}x_{j}^{e}=0\}.
\end{multline*}
Then there is $\eta>0$ such that 
$$
F_{d}(B)=c_{e,d}B^\Gamma +O(d^{-1-\eta}B^{\Gamma-\eta}),
$$
where $\Gamma=\sum_{j=1}^n\frac 1{m_j}-e$ and $c_{e,d}$ is defined in \eqref{eq:ced} and satisfies $0\leq c_{e,d}\ll d^{-1-\eta}$.
\end{lemma}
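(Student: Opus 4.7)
The plan is to parameterize the $m_j$-fullness condition, reduce to a diagonal Diophantine equation with (in general) differing degrees, and then apply the circle method for $e\geq 2$ or a hyperbola/sieve argument for $e=1$. Concretely, I would write each $m_j$-full integer $x_j$ uniquely as $x_j=y_j^{m_j}z_j$, where $z_j$ is an $(m_j{+}1)$-full integer, with the $y_j,z_j$ satisfying appropriate coprimality; this is the standard Erd\H{o}s--Szekeres decomposition of $m_j$-full numbers. Substituting into $\sum_j c_j x_j^e=0$ gives the diagonal equation
\[
\sum_{j=1}^n c_j z_j^e\, y_j^{em_j}=0,
\]
in which the $z_j$ will be small auxiliary parameters and the $y_j$ are the main variables, subject to $y_j\ll (B/z_j)^{1/m_j}$. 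The extra divisibility condition $d\mid x_j$, combined with squarefreeness of $d$ and $m_j\geq 2$, forces $d^{m_j}\mid x_j$, so after splitting $d$ out of $y_j$ (or $z_j$) the height conditions become $y_j\ll (B/(d^{m_j}z_j))^{1/m_j}$.

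For $e\geq 2$, I would apply the Birch/Davenport-type circle method for diagonal forms of \emph{differing} degrees (as developed by Br\"udern, Wooley and others), in which the auxiliary parameters $z_j$ are fixed and the $y_j$ range in boxes of the above sizes. The Weyl-type minor-arc analysis requires the standard Hua/Davenport bounds, for which the relevant parameter is $s_0(em_j)$; the assumption $\sum_j 1/(2s_0(em_j))>1$ is exactly what allows one to close the minor-arc estimate with a power saving, and $\sum_j 1/(em_j)>3$ is the convergence threshold for the singular series and the singular integral, as well as for the subsequent sum over the $z_j$. The resulting main term for each fixed $\mathbf z$ is proportional to $\prod_j(B/(d^{m_j}z_j))^{1/m_j}$ times the height-normalization factor for the degree-$em_j$ form, which collectively produces the exponent $\Gamma=\sum_j 1/m_j-e$ after summation over $\mathbf z$. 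For $e=1$ the equation is linear in $y_j^{m_j}$; one solves for one variable and counts $m$-full values of the remaining linear combination via a hyperbola-style sieve. The condition $\sum_{j=1}^{n-1}1/(em_j(em_j+1))\geq 1$ is the iterated convergence threshold ensuring that repeated trivial bounds on one variable at a time can be absorbed, and $\sum_j 1/m_j>3$ again guarantees absolute convergence of the local factors.

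The constant $c_{e,d}$ appearing in the asymptotic is a product of a singular integral (the archimedean density), a singular series (the product of local densities, truncated by $m_j$-fullness and by divisibility by $d$), and the convergent $(m_j{+}1)$-full sum over $\mathbf z$; nonnegativity is built in, and positivity would require local solubility (which is not asserted here). The claim $c_{e,d}\ll d^{-1-\eta}$ is extracted from the explicit $d^{-m_j}$ dilations in the height boxes and the $d$-dependence of the local densities at primes dividing $d$: for such primes the $p$-adic density is controlled by $p^{-m_j}$ on each coordinate, contributing $\prod_j d^{-1/m_j}\cdot d^{-\text{saving}}$, which produces a factor $d^{-(1+\eta)}$ after combining with the $B^\Gamma$ normalization.

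The main obstacle will be uniformity in $d$ and in the auxiliary parameters $\mathbf z$: to obtain a genuine power saving $O(d^{-1-\eta}B^{\Gamma-\eta})$ one must control the minor-arc contribution and the singular series uniformly across all $\mathbf z$ and $d$, which is what pins down the precise quantitative thresholds $\sum 1/(2s_0(em_j))>1$ and $\sum 1/(em_j(em_j+1))\geq 1$. The $e=1$ case is actually the more delicate one technically, since the circle method is not available and one must use a combination of Poisson summation on the linear relation and the large sieve for $m$-full integers; verifying that the iterated trivial/nontrivial bound yields a power saving under the stated combinatorial hypothesis is the crucial step.
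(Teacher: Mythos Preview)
Your overall strategy for $e\geq 2$---parameterize $m_j$-fullness, reduce to a diagonal equation with differing degrees $em_j$, and apply circle-method input of Br\"udern--Wooley type---matches the paper's approach. The paper uses a finer decomposition $|x_j|=u_j^{m_j}\prod_{r=1}^{m_j-1}v_{j,r}^{m_j+r}$ with the $v_{j,r}$ squarefree and pairwise coprime (rather than your coarser $x_j=y_j^{m_j}z_j$ with $z_j$ merely $(m_j{+}1)$-full), and correspondingly splits the squarefree $d$ as $d=s_j\prod_r t_{j,r}$ according to which factor each prime of $d$ divides. This finer bookkeeping is what makes the later tail sums and $d$-uniformity tractable; your Erd\H{o}s--Szekeres split would leave you summing over $(m_j{+}1)$-full $z_j$, which then needs to be unpacked anyway.

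Your plan for $e=1$ is where the proposal departs from the paper and, as stated, would not go through. The paper does \emph{not} treat $e=1$ by solving the linear relation for one variable and sieving for $m$-fullness; it uses the \emph{same} circle-method machinery in both cases. After the parameterization, the equation becomes $\sum_j \varepsilon_j c_j\gamma_j \tilde u_j^{em_j}=0$, which for $e=1$ is a genuine diagonal form of degrees $m_1,\dots,m_n$ in the $\tilde u_j$, and the paper invokes \cite[Theorem~2.7]{BY} directly. The hypothesis $\sum_{j=1}^{n-1}1/(m_j(m_j+1))\geq 1$ is precisely the Browning--Yamagishi minor-arc threshold for that result, not an ``iterated convergence threshold'' for a sieve; your proposed route via Poisson/large sieve for $m$-full values of a linear form is not known to yield a power-saving asymptotic under this hypothesis.

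Finally, the part you flag as ``the main obstacle''---uniformity in $d$ and in the auxiliary parameters---is in fact the bulk of the paper's proof. The paper introduces the sets $\mathcal T_d(B)$, $\mathcal V_{\mathbf s,\mathbf t}(B)$, proves a separate tail lemma (Lemma~\ref{lemdecay}) controlling $\sum_{\prod v_r^{m+r}>A}\prod_r \mu^2(v_r)\gcd(v_r^{e(m+r)},q)^{1/(em)}v_r^{-(m+r)/m}$, and then packages the $d$-dependence into auxiliary sums $S_d(D,E)$ whose Euler factors at $p\mid d$ are estimated directly to extract $d^{-1-\eta}$. Your sketch of the $d$-saving (``$\prod_j d^{-1/m_j}\cdot d^{-\text{saving}}$'') is too coarse: the actual argument must simultaneously control the $\gcd(\gamma_j,q)$ factors from the singular series and the combinatorics of $\mathcal T_d$, and this is where the assumption $\sum_j 1/(em_j)>3$ (giving $\Gamma/e>2$) is used to make the $q$-sum and the $p\mid d$ local factors converge with room to spare.
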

\begin{proof}
For every $j\in\{1,\dots,n\}$ and $x_j\in\ZZ_{\neq 0}$ that is $m_j$-full,
there exist unique $u_j,v_{j,1},\dots,v_{j,m_j-1}\in\ZZ_{>0}$ such that 
\begin{gather*}
|x_j|=u_j^{m_j}\prod_{r=1}^{m_j-1}v_{j,r}^{m_j+r}, \\ \mu^2(v_{j,r})=1, \quad \gcd(v_{j,r},v_{j,r'})=1 \quad \forall r, r'\in\{1,\dots,m_j-1\}, r\neq r'.
\end{gather*}
For every choice of $u_j$ and $v_{j,r}$ as above, if $d\mid x_j$ with $d\in\ZZ_{> 0}$ squarefree, then there exist unique $s_j,t_{j,1},\dots,t_{j,m_j-1}\in\ZZ_{>0}$ such that 
\begin{gather*}
d=s_j\prod_{r=1}^{m_j-1}t_{j,r}, \quad
\mu^2(s_j)=\mu^2(t_{j,r})=1  \ \forall r\in\{1,\dots,m_j-1\} \\
\gcd(s_j,v_{j,r})=\gcd(s_j,t_{j,r})=\gcd(t_{j,r},t_{j,r'})=1 \ \forall r,r'\in\{1,\dots,m_j-1\}, r\neq r'\\
s_j\mid u_j, \ t_{j,r}\mid v_{j,r} \ \forall r\in\{1,\dots,m_j-1\}.
\end{gather*}
Write $u_j=s_j\tilde u_j$ and $v_{j,r}=t_{j,r}\tilde v_{j,r}$ for all $r\in\{1,\dots,m_{j-1}\}$. Write $\mathbf s=(s_1,\dots,s_n)$, $\mathbf t=(t_{j,r})_{1\leq j\leq n, 1\leq r\leq m_j-1}$. For $j\in\{1,\dots,n\}$, write 
$$\sigma_j=s_j\prod_{r=1}^{m_j-1}t_{j,r}, \qquad \tau_j= s_j^{m_j}\prod_{r=1}^{m_j-1}t_{j,r}^{m_j+r}, \qquad w_j=\prod_{r=1}^{m_j-1}\tilde v_{j,r}^{m_j+r}.$$

Let $\mathcal T_d(B)$ be the set of pairs $(\mathbf s,\mathbf t)\in\ZZ_{>0}^n\times\ZZ_{>0}^{\sum_{j=1}^n(m_j-1)}$ that satisfy
\begin{gather*}
\mu^2(\sigma_j)=1, \quad d=\sigma_j, \quad \tau_j\leq B
\quad
 \forall j\in\{1,\dots,s\}.
\end{gather*}
Note that the first two conditions imply 
\begin{equation}\label{eq:card T_d(B)}
\sharp \mathcal T_d(B) \leq \prod_{j=1}^n m_j^{\omega(d)} \ll d^{\epsilon},
\end{equation}
where $\omega(d)$ is the number of distinct prime divisors of $d$.
Let $\mathcal V_{\mathbf s,\mathbf t}(B)$ be the set of $\tilde{\mathbf v}=(\tilde v_{j,r})_{1\leq j\leq n,1\leq r\leq m_j-1}\in\ZZ_{>0}^{\sum_{j=1}^n(m_j-1)}$ such that
\begin{gather*}
\mu^2(t_{j,r}\tilde v_{j,r})=1, \ \gcd(s_j, \tilde v_{j,r})=1  \ \forall j\in\{1,\dots,n\},  r\in\{1,\dots,m_j-1\},\\
\gcd(t_{j,r}\tilde v_{j,r},t_{j,r'}\tilde v_{j,r'})=1  \ \forall j\in\{1,\dots,n\},  r,r'\in\{1,\dots,m_j-1\}, r\neq r',\\
\tau_j w_j\leq B
\ \forall j\in\{1,\dots,n\}.
\end{gather*}
Let $\mathcal T_d(\infty)=\bigcup_{B>0}\mathcal T_d(B)$ and $\mathcal V_{\mathbf s,\mathbf t}(\infty)=\bigcup_{B>0}\mathcal V_{\mathbf s,\mathbf t}(B)$.

Then
\begin{equation}\label{eq:cases}
F_{d}(B)=\begin{cases}
\sum_{\beps\in\{\pm1\}^n}\sum_{(\mathbf s,\mathbf t)\in \mathcal T_d(B)}\sum_{\tilde{\mathbf v}\in\mathcal V_{\mathbf s,\mathbf t}(B)} M_{\beps\mathbf c,\bgamma}(B^e) &  \text{ if $e$ is odd}, \\
2^n\sum_{(\mathbf s,\mathbf t)\in \mathcal T_d(B)}\sum_{\tilde{\mathbf v}\in\mathcal V_{\mathbf s,\mathbf t}(B)} M_{\mathbf c,\bgamma}(B^e) & \text{ if $e$ is even},\\
\end{cases}
\end{equation}
where 
$\mathbf c=(c_1,\dots,c_n)$, $\beps=(\varepsilon_1,\dots,\varepsilon_n)$, $\beps\mathbf c=(\varepsilon_1c_1,\dots,\varepsilon_nc_n)$,
$\bgamma=(\gamma_1,\dots,\gamma_n)$ with
$$
\gamma_j=s_j^{em_j}\prod_{r=1}^{m_j-1}t_{j,r}^{e(m_j+r)}\tilde v_{j,r}^{e(m_j+r)} \quad  \forall j\in\{1,\dots,n\},
$$
and
$$
M_{\beps\mathbf c,\bgamma}(B^e)=\sharp\left\{(\tilde u_1,\dots,\tilde u_n)\in\ZZ_{>0}^n: \max_{1\leq j\leq n}\gamma_j\tilde u_j^{em_j}\leq B^e, \ \sum_{j=1}^n\varepsilon_j c_j\gamma_j\tilde u_j^{em_j}=0\right\}.
$$

An estimate for $M_{\beps\mathbf c,\bgamma}(B^e)$ is proven in \cite[Theorem 2.7]{BY} in the case where 
$\sum_{j=1}^{n-1}\frac 1{em_j(em_j+1)}\geq 1$.
The subsequent paper \cite[Theorem 5.3]{BBKOPWproc} extends the range of applicability of \cite[Theorem 2.7]{BY} to the case where  
$\sum_{j=1}^n\frac 1{em_j}>3$, $\sum_{j=1}^n\frac 1{2s_0(em_j)}>1$.

Let 
$$
\Theta_e = \begin{cases}\frac 1{m_n(m_n+1)} & \text{ if } e=1,\\
\sum_{j=1}^n\frac 1{2s_0(em_j)}-1 & \text{ if } e\geq 2. \\ \end{cases}
$$
For $0<\delta<\frac 1{(2(n-1)+5)em_n(em_n+1)}$ and $\epsilon>0$, \cite[Theorem 2.7]{BY} and \cite[Theorem 5.3]{BBKOPWproc} give
\begin{multline}\label{eq:BBKOPW}
\sum_{(\mathbf s,\mathbf t)\in \mathcal T_d(B)}\sum_{\tilde{\mathbf v}\in\mathcal V_{\mathbf s,\mathbf t}(B)} M_{\beps\mathbf c,\bgamma}(B^e)
=\sum_{(\mathbf s,\mathbf t)\in \mathcal T_d(B)}\sum_{\tilde{\mathbf v}\in\mathcal V_{\mathbf s,\mathbf t}(B)} \frac{\mathfrak S_{\beps\mathbf c,\mathbf \bgamma}\mathfrak I_{\beps\mathbf c}}{\prod_{j=1}^n\gamma_j^{\frac 1{em_j}}}B^{\Gamma}\\
+O\left(B^{\Gamma}(F_1+F_2+F_3)\right),
\end{multline}
where 
\begin{gather*}
\mathfrak S_{\beps\mathbf c,\bgamma}= \sum_{q=1}^\infty \frac 1{q^n}\sum_{\substack{ a (\text{mod } q)\\ \gcd(a,q)=1}}\prod_{j=1}^n\sum_{r=1}^q \exp(2\pi i a\varepsilon_jc_j\gamma_jr^{em_j}/q), \\
\mathfrak I_{\beps\mathbf c}=\int_{-\infty}^\infty \prod_{j=1}^n\left(\int_0^1\exp(2\pi i\lambda \varepsilon_jc_j\xi^{em_j})\mathrm d\xi\right)\mathrm d\lambda,\\
F_1=B^{e((2(n-1)+5)\delta-1)-\Gamma} 
\sum_{(\mathbf s,\mathbf t)\in \mathcal T_d(B)}\sum_{\tilde{\mathbf v}\in\mathcal V_{\mathbf s,\mathbf t}(B)}
\left( \prod_{j=1}^n \frac{B^{\frac 1{m_j}}}{\gamma_j^{\frac1{em_j}}} \right)
\sum_{l=1}^n\frac{\gamma_l^{\frac1{em_l}}}{B^{\frac 1{m_l}}}, \\
F_2=B^{-e\delta}\sum_{(\mathbf s,\mathbf t)\in \mathcal T_d(B)}\sum_{\tilde{\mathbf v}\in\mathcal V_{\mathbf s,\mathbf t}(B)} \sum_{q=1}^\infty q^{1-\Gamma/e+\epsilon}\prod_{j=1}^n\gcd(\gamma_j,q)^{\frac 1{em_j}}\gamma_j^{-\frac 1{em_j}}, \\
F_3=\begin{cases} B^{-e\delta\Theta_e+\epsilon} \sum_{(\mathbf s,\mathbf t)\in \mathcal T_d(B)}\sum_{\tilde{\mathbf v}\in\mathcal V_{\mathbf s,\mathbf t}(B)} \prod_{j=1}^n\gamma_j^{-\frac 1{m_j+1}} &\text{ if }e=1,\\
B^{-e\delta\Theta_e+\epsilon} \sum_{(\mathbf s,\mathbf t)\in \mathcal T_d(B)}\sum_{\tilde{\mathbf v}\in\mathcal V_{\mathbf s,\mathbf t}(B)} \prod_{j=1}^n\gamma_j^{-\frac 1{em_j}+\frac 1{2s_0(em_j)}} & \text{ if } e\geq 2.
\end{cases}
\end{gather*}

Let 
\begin{gather}\label{eq:ced}
c_{e,d}=\begin{cases}
\sum_{\beps\in\{\pm1\}^n}\sum_{(\mathbf s,\mathbf t)\in \mathcal T_d(\infty)}\sum_{\tilde{\mathbf v}\in\mathcal V_{\mathbf s,\mathbf t}(\infty)} \frac{\mathfrak S_{\beps\mathbf c,\mathbf \bgamma}\mathfrak I_{\beps\mathbf c}}{\prod_{j=1}^n\gamma_j^{\frac 1{em_j}}} &  \text{ if $e$ is odd}, \\
2^n\sum_{(\mathbf s,\mathbf t)\in \mathcal T_d(\infty)}\sum_{\tilde{\mathbf v}\in\mathcal V_{\mathbf s,\mathbf t}(\infty)} \frac{\mathfrak S_{\mathbf c,\mathbf \bgamma}\mathfrak I_{\mathbf c}}{\prod_{j=1}^n\gamma_j^{\frac 1{em_j}}} & \text{ if $e$ is even}.\\
\end{cases} 
\end{gather}

For $T>0$, let
\begin{gather*}
f_1(q)=\sum_{(\mathbf s,\mathbf t)\in \mathcal T_d(\infty)}\prod_{j=1}^n\left(\frac{\gcd(\tau_j^e,q)}{\tau_j^e}\right)^{\frac 1{em_j}}, \\
f_2(q)=\sum_{\tilde{\mathbf v}\in \mathcal V_{\mathbf{1},\mathbf{1}}(\infty)}\prod_{j=1}^n\left(\frac{\gcd(w_j^e,q)}{w_j^e}\right)^{\frac 1{em_j}},
\end{gather*}
and
\begin{gather*}
f_2(q,T,\mathbf{s},\mathbf{t})=\sum_{\tilde{\mathbf v}\in \mathcal V_{\mathbf s,\mathbf t}(\infty)\smallsetminus \mathcal V_{\mathbf s,\mathbf t}(T)}\prod_{j=1}^n\left(\frac{\gcd(w_j^e,q)}{w_j^e}\right)^{\frac 1{em_j}}.
\end{gather*}

Note that for $\sum_{j=1}^n \frac{1}{em_j} >1$ we have
$$\left| \mathfrak I_{\beps\mathbf c}\right| \ll 1.$$
Similarly as in \cite[(2.8), (2.9), (2.12)]{BY}, the difference between $c_{e,d}B^{\Gamma}$ and the main term obtained combining \eqref{eq:cases} and \eqref{eq:BBKOPW} is bounded by 

\begin{gather}
B^{\Gamma}\sum_{(\mathbf s,\mathbf t)\in \mathcal T_d(\infty)} \sum_{\tilde{\mathbf v}\in \mathcal V_{\mathbf s,\mathbf t}(\infty)\smallsetminus \mathcal V_{\mathbf s,\mathbf t}(B)} \sum_{q=1}^\infty q^{1-\sum_{j=1}^n \frac{1}{em_j}} \prod_{j=1}^n \gamma_j^{-\frac{1}{em_j}} \gcd(\gamma_j,q)^{\frac{1}{em_j}}\\\label{eq:ced_difference}
\ll B^{\Gamma}\sum_{q=1}^\infty q^{-\Gamma/e+\epsilon} \sum_{(\mathbf s,\mathbf t)\in \mathcal T_d(\infty)}\prod_{j=1}^n\left(\frac{\gcd(\tau_j^e,q)}{\tau_j^e}\right)^{\frac 1{em_j}} f_2(q,B,\mathbf{s},\mathbf{t}),
\end{gather}
and $c_{e,d}\ll \sum_{q=1}^\infty q^{-\Gamma/e+\epsilon}f_1(q)f_2(q)$.

By \cite[(3.10)]{BY} and the arguments used to prove \cite[(3.9)]{BY}, we have
\begin{equation}\label{f2}
f_2(q)\ll q^{\epsilon},
\end{equation}
and
\begin{gather*}
f_2(q,T,\mathbf{s},\mathbf{t})\ll \sum_{i_0=1}^n \sum_{\substack{\tilde v_{i,r},\ 1\leq i\leq n,\ 1\leq r\leq m_i-1\\\prod_{r=1}^{m_i-1}\tilde v_{i,r}^{m_i+r}>\frac{T}{\tau_i}\ \text{if } i=i_0}} \prod_{i=1}^n\prod_{r=1}^{m_i-1}\frac{\mu^2(\tilde v_{i,r})\gcd(\tilde v_{i,r}^{e(m_i+r)},q)^\frac 1{em_i}}{\tilde v_{i,r}^{(m_i+r)/m_i}}\\
\ll q^{\epsilon} \sum_{i=1}^n \sum_{\substack{\tilde v_{i,r},\ 1\leq r\leq m_i-1\\\prod_{r=1}^{m_i-1}\tilde v_{i,r}^{m_i+r}>\frac{T}{\tau_i} }} \prod_{r=1}^{m_i-1}\frac{\mu^2(\tilde v_{i,r})\gcd(\tilde v_{i,r}^{e(m_i+r)},q)^\frac 1{em_i}}{\tilde v_{i,r}^{(m_i+r)/m_i}}.
\end{gather*}

Our next goal is to provide an upper bound for sums of the type occurring in this estimate for $f_2(q,T, \mathbf{s},\mathbf{t})$.

\begin{lemma}\label{lemdecay}
Let $m\in \mathbb{N}_{\geq 2}$, $e\in \mathbb{N}$ and $A>0$ a real parameter. Then, for every $0<\epsilon <\frac{1}{m(m+1)}$ we have
\begin{equation*}
 \sum_{\substack{ v_{r}\in \mathbb{N},\ 1\leq r\leq m-1\\\prod_{r=1}^{m-1} v_{r}^{m+r}>A }} \prod_{r=1}^{m-1}\frac{\mu^2( v_{r})\gcd( v_{r}^{e(m+r)},q)^\frac 1{em}}{ v_{r}^{(m+r)/m}} \ll_{m,\epsilon} A^{-\frac{1}{m(m+1)}+\epsilon} q^{\frac{m-1}{em(m+1)}+\epsilon}.
\end{equation*}

\end{lemma}

\begin{proof}
We first consider the sum
$$S_1:=  \sum_{\substack{ v_{r}\in \mathbb{N},\ 1\leq r\leq m-1\\\prod_{r=1}^{m-1} v_{r}^{m+r}>A }} \prod_{r=1}^{m-1}\frac{1}{ v_{r}^{(m+r)/m}}$$
for $A>1$. A dyadic decomposition for each of the variables $v_r$, $1\leq r\leq m-1$, leads us to the upper bound
\begin{gather*}
S_1\ll \sum_{\substack{l_1,\ldots, l_{m-1}\in \mathbb{N}\\ 2^{(m+1)l_1+\ldots + (2m-1)l_{m-1}}>A}} 2^{-\frac{1}{m}l_1-\ldots - \frac{m-1}{m} l_{m-1}} 
\end{gather*}
Note that for each $k\in \frac{1}{m}\mathbb{N}$ we have
$$\sharp\{l_1,\ldots, l_{m-1}\in \mathbb{N}:\ \frac{1}{m}l_1+\ldots + \frac{m-1}{m} l_{m-1}=k\}\ll_m k^{m-2}.$$
We deduce that
\begin{gather*}
S_1\ll_m \sum_{\substack{k\in \frac{1}{m}\mathbb{N}\\ r(k)>0}} k^{m-2} 2^{-k} 
\end{gather*}
where $r(k)$ is the number of $(l_1,\ldots, l_{m-1})\in \mathbb{N}^{m-1}$ such that both $$\frac{1}{m}l_1+\ldots + \frac{m-1}{m} l_{m-1}=k \quad \mbox{and}\quad 2^{(m+1)l_1+\ldots + (2m-1)l_{m-1}}>A$$ hold. Observe that if $r(k)>0$, then there exists $(l_1,\ldots, l_{m-1})\in \mathbb{N}^{m-1}$ with $\frac{1}{m}l_1+\ldots + \frac{m-1}{m} l_{m-1}=k $ and
\begin{gather*}
m(m+1)k=(m+1) \left(l_1+\ldots + (m-1) l_{m-1}\right)\\  \geq (m+1) l_1+ \frac{m+2}{2} 2 l_2+\ldots + \frac{2m-1}{m-1} (m-1)l_{m-1}> \log A/\log 2,
\end{gather*}
i.e. 
$$S_1\ll_m \sum_{\substack{ k\in \frac{1}{m}\mathbb{N}\\m(m+1)k>\log A/\log 2}} k^{m-2} 2^{-k}\ll_{m,\epsilon} A^{-\frac{1}{m(m+1)}+\epsilon}.$$
Note that the upper bound for $S_1$ also holds for $A\leq1$ and $\epsilon < \frac{1}{m(m+1)}$.\par

We now turn to the sum in the statement of the lemma. If $v_r$ is a square-free natural number and $d_r=\gcd(v_r^{e(m+r)},q)$, then we can write 
$$d_r=d_{r,1}d_{r,2}^2\ldots d_{r,e(m+r)}^{e(m+r)},\quad \mu^2(d_{r,j})=1,\ \forall 1\leq j\leq e(m+r),\quad \gcd(d_{r,j},d_{r,j'})=1,\ \forall j\neq j'.$$
Writing $v_r=v_r'\prod_{j=1}^{e(m+r)}d_{r,j}$ and $d_r'=\prod_{j=1}^{e(m+r)}d_{r,j}$ we find that
\begin{gather*}
S_2:=\sum_{\substack{ v_{r}\in \mathbb{N},\ 1\leq r\leq m-1\\\prod_{r=1}^{m-1} v_{r}^{m+r}>A }} \prod_{r=1}^{m-1}\frac{\mu^2( v_{r})\gcd( v_{r}^{e(m+r)},q)^\frac 1{em}}{ v_{r}^{(m+r)/m}} \\
\ll \sum_{d_{r,1}d_{r,2}^2\ldots d_{r,e(m+r)}^{e(m+r)} \mid q,\ 1\leq r\leq m-1} \sum_{\substack{ v'_{r}\in \mathbb{N},\ 1\leq r\leq m-1\\\prod_{r=1}^{m-1} (d'_rv'_{r})^{m+r}>A }} \prod_{r=1}^{m-1}\frac{d_r^\frac 1{em}}{ (d'_rv'_{r})^{(m+r)/m}} \\
\ll \sum_{d_{r,1}d_{r,2}^2\ldots d_{r,e(m+r)}^{e(m+r)} \mid q,\ 1\leq r\leq m-1} \prod_{r=1}^{m-1} \left(\frac{d_r^{\frac{1}{em}}}{d_r'^{\frac{m+r}{m}}}\right) \sum_{\substack{ v'_{r}\in \mathbb{N},\ 1\leq r\leq m-1\\\prod_{r=1}^{m-1} (d'_rv'_{r})^{m+r}>A }} \prod_{r=1}^{m-1}\frac{1}{ (v'_{r})^{(m+r)/m}}
\end{gather*}
By using the upper bound for $S_1$ we find that for $\epsilon >0$ sufficiently small
\begin{gather*}
S_2 \ll_{\epsilon,m} \sum_{d_{r,1}d_{r,2}^2\ldots d_{r,e(m+r)}^{e(m+r)} \mid q,\ 1\leq r\leq m-1} \prod_{r=1}^{m-1} \left(\frac{d_r^{\frac{1}{em}}}{d_r'^{\frac{m+r}{m}}}\right) A^{-\frac{1}{m(m+1)}+\epsilon} \left(\prod_{r=1}^{m-1}(d_r')^{m+r}\right)^{\frac{1}{m(m+1)}} \\
\ll_{\epsilon,m} A^{-\frac{1}{m(m+1)}+\epsilon} \sum_{d_{r,1}d_{r,2}^2\ldots d_{r,e(m+r)}^{e(m+r)} \mid q,\ 1\leq r\leq m-1} \prod_{r=1}^{m-1} \left( d_r^{\frac{1}{em}} (d_r')^{-\frac{m+r}{m+1}}\right)\\
\ll_{\epsilon,m} A^{-\frac{1}{m(m+1)}+\epsilon} \sum_{d_{r,1}d_{r,2}^2\ldots d_{r,e(m+r)}^{e(m+r)} \mid q,\ 1\leq r\leq m-1} \prod_{r=1}^{m-1}  d_{r}^{\frac{1}{em}-\frac{m+r}{e(m+r)(m+1)}}\\
\ll_{\epsilon,m} A^{-\frac{1}{m(m+1)}+\epsilon} \sum_{d_r \mid q,\ 1\leq r\leq m-1} \prod_{r=1}^{m-1} d_{r}^{\frac{1}{em(m+1)}} \ll_{\epsilon,m} A^{-\frac{1}{m(m+1)}+\epsilon} q^{\frac{m-1}{em(m+1)}+\epsilon}.\qedhere
\end{gather*}

\end{proof}

Lemma \ref{lemdecay} shows that we can bound $f_2(q,T,\mathbf{s},\mathbf{t})$ by
\begin{gather*}
f_2(q,T,\mathbf{s},\mathbf{t})\ll \sum_{i=1}^n \left(\frac{T}{\tau_i}\right)^{-\frac{1}{m_i(m_i+1)}+\epsilon} q^{\frac{m_i-1}{em_i(m_i+1)}+\epsilon}.
\end{gather*}

In the following we write $\Delta_i= \frac{m_i-1}{em_i(m_i+1)}$. Then \eqref{eq:ced_difference} is bounded by 

\begin{gather*}
S_3 := B^{\Gamma} \sum_{i=1}^n  \sum_{q=1}^\infty q^{-\Gamma/e+\Delta_i+\epsilon} \sum_{(\mathbf s,\mathbf t)\in \mathcal T_d(\infty)}\prod_{j=1}^n\left(\frac{\gcd(\tau_j^e,q)}{\tau_j^e}\right)^{\frac 1{em_j}}  \left(\frac{B}{\tau_i}\right)^{-\frac{1}{m_i(m_i+1)}+\epsilon}\\
\ll B^{\Gamma} \sum_{i=1}^n B^{-\frac{1}{m_i(m_i+1)}+\epsilon} \sum_{(\mathbf s,\mathbf t)\in \mathcal T_d(\infty)}\sum_{q=1}^\infty q^{-\Gamma/e+\Delta_i+\epsilon} \tau_i^{\frac{1}{m_i(m_i+1)}}\prod_{j=1}^n\left(\frac{\gcd(\tau_j^e,q)}{\tau_j^e}\right)^{\frac 1{em_j}}.  
\end{gather*} 

As we will encounter similar expressions in our further analysis, we introduce for $E,D>0$ and $d$ squarefree the following sum

$$S_d(D,E):=d^E \sum_{(\mathbf s,\mathbf t)\in \mathcal T_d(\infty)}\sum_{q=1}^\infty q^{-\Gamma/e+D+\epsilon} \prod_{j=1}^n\left(\frac{\gcd(\tau_j^e,q)}{\tau_j^e}\right)^{\frac 1{em_j}}. $$

We write $q=q_1q_2$ with $\gcd(q_1,d)=1$ and such that all prime divisors of $q_2$ divide $d$. With this we obtain that
\begin{gather*}
S_d(D,E)\ll d^E \sum_{(\mathbf s,\mathbf t)\in \mathcal T_d(\infty)} \sum_{q_1=1}^\infty q_1^{-\Gamma/e+D+\epsilon} \sum_{\substack{q_2=1\\p\mid q_2\Rightarrow p\mid d}}^\infty q_2^{-\Gamma/e+D+\epsilon}  \prod_{j=1}^n\left(\frac{\gcd(\tau_j^e,q_2)}{\tau_j^e}\right)^{\frac 1{em_j}}.  
\end{gather*}

If we assume $-\Gamma/e+D<-1$, then the sum over $q_1$ is absolutely convergent. For a given vector $(\mathbf{s},\mathbf{t})\in \mathcal T_d(\infty)$ and a prime $p$ we write $\tau_{j,p}$ for the power of $p$ which exactly divides $\tau_j$. We find that
\begin{gather*}
S_d(D,E)\ll  \sum_{(\mathbf s,\mathbf t)\in \mathcal T_d(\infty)}   d^E \prod_{p\mid d}\left( \sum_{l=0}^\infty p^{l(-\Gamma/e+D+\epsilon)}  \prod_{j=1}^n\left(\frac{\gcd(\tau_{j,p}^e,p^l)}{\tau_{j,p}^e}\right)^{\frac 1{em_j}}  \right).
\end{gather*}
We now split the summation over $l$ into the term $l=0$, where we use the inequality $\tau_{j,p}\geq p^{m_j}$, and we bound the rest by a geometric sum for $l\geq 1$ using $\gcd(\tau_{j,p}^e,p^l)\leq\tau_{j,p}^e$,
\begin{gather*}
S_d(D,E) \ll_D  \sum_{(\mathbf s,\mathbf t)\in \mathcal T_d(\infty)}  d^{E+\epsilon}  \prod_{p\mid d}\left( p^{-n} + p^{-\Gamma/e+D+\epsilon}\right)\\
\ll_D  d^{\epsilon}  \prod_{p\mid d}\left( p^{E-n} + p^{-\Gamma/e+D+E+\epsilon}\right)
\end{gather*}
If $-\Gamma/e+D+E<-1$, then we deduce that
\begin{equation}\label{SdDE}
S_d(D,E)\ll_D d^{-1-\eta}
\end{equation} 
for some $\eta >0$.\par
Applying equation (\ref{SdDE}) to $S_3$ with $D=\Delta_i$ and $E=\frac{2m_i-1}{m_i(m_i+1)}$ we obtain $S_3\ll B^{\Gamma-\eta} d^{-1-\eta}$, for some $\eta>0$. 
Hence,
$$
F_{d}(B)=c_{e,d}B^\Gamma +O(B^\Gamma(d^{-1-\eta}B^{-\eta} +F_1+F_2+F_3)).
$$
We use the bound in (\ref{f2}) and apply equation (\ref{SdDE}) with $D=E=0$ to get $c_{e,d}\ll d^{-1-\eta}$.\par
It remains to estimate the error terms $F_1,F_2,F_3$. We rewrite $F_1$ as follows:
$$
F_1=B^{e\delta(2(n-1)+5)} 
\sum_{l=1}^nB^{-\frac 1{m_l}} \sum_{(\mathbf s,\mathbf t)\in \mathcal T_d(B)}\sum_{\tilde{\mathbf v}\in\mathcal V_{\mathbf s,\mathbf t}(B)}
\prod_{\substack{1\leq j\leq n\\j\neq l}}\gamma_j^{-\frac 1{em_j}}.
$$
 As in \cite[\S3]{BY} and \cite[\S6]{BBKOPWproc}, we have
\begin{gather*}
F_1\ll B^{-\frac 1{m_n(m_n+1)} + e\delta(2(n-1)+5)}\sum_{(\mathbf s,\mathbf t)\in \mathcal T_d(B)} \prod_{j=1}^n\tau_j^{-\frac 1{m_j+1}} \\
\ll d^{-\sum_{j=1}^n\frac{m_j}{m_j+1}+\varepsilon} B^{-\frac 1{m_n(m_n+1)} + e\delta(2(n-1)+5)},
\end{gather*}
where the last estimate follows from
\begin{gather*}
\sum_{(\mathbf s,\mathbf t)\in \mathcal T_d(B)} \prod_{j=1}^n\tau_j^{-\frac 1{m_j+1}} 
\leq  \sum_{(\mathbf s,\mathbf t)\in \mathcal T_d(B)} \prod_{j=1}^n\sigma_j^{-\frac {m_j}{m_j+1}}\\
\leq  d^{-\sum_{j=1}^n\frac{m_j}{m_j+1}}\sharp \mathcal T_d(B)
\ll  d^{-\sum_{j=1}^n\frac{m_j}{m_j+1}+\varepsilon}.
\end{gather*}
by \eqref{eq:card T_d(B)}.
Combining the arguments for $F_3$ in \cite[\S3]{BY} and in \cite[\S6]{BBKOPWproc} and the estimate above we have
\begin{gather*}
F_3\ll B^{-e\delta\Theta_e +\epsilon} \sum_{(\mathbf s,\mathbf t)\in \mathcal T_d(B)} \prod_{j=1}^n\tau_j^{-\frac 1{m_j+1}} \ll d^{-\sum_{j=1}^n\frac{m_j}{m_j+1}+\epsilon} B^{-e\delta\Theta_e +\epsilon}.
\end{gather*}
Since $\sum_{j=1}^n\frac{m_j}{m_j+1}\geq \frac 23n>1$ is satisfied for $n\geq 2$, we have $F_1,F_3\ll d^{-1-\eta} B^{-\eta}$ for a suitable $\eta>0$.
Since
$$
F_2=B^{-e\delta}\sum_{q=1}^\infty q^{1-\Gamma/e+\epsilon}f_1(q)f_2(q),
$$
the estimate (\ref{f2}) combined with (\ref{SdDE}) for $D=1$ and $E=0$ yields $F_2\ll d^{-1-\eta}B^{-e\delta}$, as $\Gamma/e >2$.\end{proof}

By Lemma \ref{lem:diagonal} and \cite[Lemma 5.6]{hypmethod}
	$$ F_{\mathbf d}(B_1,\dots,B_s)=\prod_{i=1}^s \left(c_{M,i}B_i^{\varpi_i} + O(d_i^{\nu_i+\varepsilon}B_i^{\varpi_i-\delta})\right)$$
	where 
	\begin{gather}\label{eq:Campana varpi}
	\varpi_i=\begin{cases}\sum_{j=1}^{n_i}\frac 1{m_{i,j}}-e_i & \text{ if } i\leq t, \\ \sum_{j=1}^{n_i}\frac 1{m_{i,j}} & \text{ if } i>t,\\\end{cases}
	\end{gather}
	 $\nu_i<-1$ for $i\leq t$, $\nu_i=-\frac 23 n_i$ if $i>t$, $c_{M,i}$ is the constant $c_{e_i,d_i}$ defined in \eqref{eq:ced} if $i\leq t$, and $c_{M,i}=2^{n_i}\left(\prod_{j=1}^{n_i}c_{m_{i,j},d_i}\right)$, where $c_{m_{i,j},d_i}$ is the constant defined in \cite[(5.11)]{hypmethod}.
	
	Thus 
	$$
	F_{\mathbf d}(B_1,\dots,B_s)=C_{M,\mathbf d}\prod_{i=1}^sB_i^{\varpi_i}+O\left(C_{E,\mathbf d}(\min_{1\leq i\leq s}B_i)^{-\delta}\prod_{i=1}^sB_i^{\varpi_i}\right),
	$$
	where 
	\begin{equation}\label{eq:Campana C_M}
	C_{M,\mathbf d}=\prod_{i=1}^sc_{M,i}.
	\end{equation}
	Lemma \ref{lem:diagonal} and \cite[(5.14), (5.15)]{hypmethod} give $C_{M,\mathbf d}, C_{E,\mathbf d}\ll \prod_{i=1}^sd_i^{-\beta_i}$ with $\beta_i>1$ whenever $n_i\geq 2$, and $\beta_i>\frac 23-\varepsilon$ otherwise. For $\varepsilon>0$ sufficiently small, $\beta_i+\beta_j>1$ for every $i,j\in\{1,\dots,s\}$. Thus, if $f_{\beta_1,\dots,\beta_s}\leq 1$, by Remark \ref{rem:mu}\ref{item: mu tildef} then there exists an index $\tilde i\in\{1,\dots,s\}$ such that $\tilde i\in I_\sigma$ for all $\sigma\in\Sigma_{\max}$ and $n_{\tilde i}=1$. Then the ray $\rho_{\tilde i,1}$ is contained in no  maximal cone of $\Sigma$, contradicting the fact that $X$ is proper. 
	
Since $c_{i,j}\neq 0$ for all $i\in\{1,\dots,t\}$, $j\in\{1,\dots,n_i\}$, the adjunction formula \cite[Proposition 16.4]{MR1225842} gives $K_V=(K_X+ H_1+\dots+H_t)|_V$.
	Since
	\begin{equation*}
\sum_{i=1}^s\varpi_i\divclass_i=-K_X-\sum_{i=1}^s\sum_{j=1}^{n_i}(1-\frac 1{m_{i,j}})\divclass_i +\sum_{i=1}^te_i\divclass_i=-(K_X+[\mathscr D_{\mathbf m}|_X] + [H_1+\dots+H_t]),
\end{equation*}
Lemma \ref{lem:counting} gives
$$
N_V(B)=cB(\log B)^{b-1} +O(B(\log B)^{b-2}(\log\log B)^s),
$$
where $b=\rk\pic(X)$, and $c$ is defined in \eqref{eq:leading constant} with $k=b-1$,  $C_{M,\mathbf d}$ given by \eqref{eq:Campana C_M},  and
$\varpi_1,\dots,\varpi_s$ given by \eqref{eq:Campana varpi}. Moreover, the restriction $\pic(X)\to\pic(V)$ is an isomorphism as $n_i\geq 3$ for $1\leq i\leq t$.
\end{proof}

\bibliographystyle{alpha}
\bibliography{CampanaPointsProjective}

\newcommand{\etalchar}[1]{$^{#1}$}
\begin{thebibliography}{PSTVA21}

\bibitem[BB18]{BB}
V.~Blomer and J.~Br\"udern.
\newblock Counting in hyperbolic spikes: {T}he diophantine analysis of
  multihomogeneous diagonal equations.
\newblock {\em J. Reine Angew. Math.}, 737:255--300, 2018.

\bibitem[BBK{\etalchar{+}}24]{BBKOPWproc}
Francesca Balestrieri, Julia Brandes, Miriam Kaesberg, Judith Ortmann, Marta
  Pieropan, and Rosa Winter.
\newblock Campana points on diagonal hypersurfaces.
\newblock In {\em Women in numbers {E}urope {IV}}, volume~32 of {\em Assoc.
  Women Math. Ser.}, pages 63--92. Springer, Cham, [2024] \copyright 2024.

\bibitem[BG06]{BombieriGubler}
E.~Bombieri and W.~Gubler.
\newblock {\em Heights in {D}iophantine {G}eometry}.
\newblock New Mathematical Monographs. Cambridge University Press, 4 edition,
  2006.

\bibitem[BH19]{BroHu}
T.~D. Browning and L.~Q. Hu.
\newblock Counting rational points on biquadratic hypersurfaces.
\newblock {\em Adv. Math.}, 349:920--940, 2019.

\bibitem[BHB17]{BHBdiffering}
T.~Browning and R.~Heath-Brown.
\newblock Forms in many variables and differing degrees.
\newblock {\em J. Eur. Math. Soc. (JEMS)}, 19(2):357--394, 2017.

\bibitem[Bir62]{birch61}
B.~J. Birch.
\newblock Forms in many variables.
\newblock {\em Proc. Roy. Soc. Ser. A}, 265:245--263, 1961/1962.

\bibitem[BM90]{MR1032922}
V.~V. Batyrev and Y.~I. Manin.
\newblock Sur le nombre des points rationnels de hauteur born{\'e} des
  vari{\'e}t{\'e}s alg{\'e}briques.
\newblock {\em Math. Ann.}, 286(1-3):27--43, 1990.

\bibitem[BY21]{BY}
T.~Browning and S.~Yamagishi.
\newblock Arithmetic of higher-dimensional orbifolds and a mixed {W}aring
  problem.
\newblock {\em Math. Z.}, 299(1-2):1071--1101, 2021.

\bibitem[CLNS18]{MR3838446}
A.~Chambert-Loir, J.~Nicaise, and J.~Sebag.
\newblock {\em Motivic integration}, volume 325 of {\em Progress in
  Mathematics}.
\newblock Birkh\"{a}user/Springer, New York, 2018.

\bibitem[CLS11]{MR2810322}
D.~A. Cox, J.~B. Little, and H.~K. Schenck.
\newblock {\em Toric varieties}, volume 124 of {\em Graduate Studies in
  Mathematics}.
\newblock American Mathematical Society, Providence, RI, 2011.

\bibitem[Cox95]{MR1299003}
D.~A. Cox.
\newblock The homogeneous coordinate ring of a toric variety.
\newblock {\em J. Algebraic Geom.}, 4(1):17--50, 1995.

\bibitem[DP20]{MR4177266}
U.~Derenthal and M.~Pieropan.
\newblock The split torsor method for {M}anin's conjecture.
\newblock {\em Trans. Amer. Math. Soc.}, 373(12):8485--8524, 2020.

\bibitem[FMT89]{MR0974910}
J.~Franke, Y.~I. Manin, and Y.~Tschinkel.
\newblock Rational points of bounded height on {F}ano varieties.
\newblock {\em Invent. Math.}, 95(2):421--435, 1989.

\bibitem[FP16]{MR3552013}
C.~Frei and M.~Pieropan.
\newblock O-minimality on twisted universal torsors and {M}anin's conjecture
  over number fields.
\newblock {\em Ann. Sci. \'Ec. Norm. Sup\'er. (4)}, 49(4):757--811, 2016.

\bibitem[HB96]{HBdeltamethod}
D.~R. Heath-Brown.
\newblock A new form of the circle method, and its application to quadratic
  forms.
\newblock {\em J. Reine Angew. Math.}, 481:149--206, 1996.

\bibitem[Hu20]{Hu}
L.~Q. Hu.
\newblock Counting rational points on biprojective hypersurfaces of bidegree
  {$(1,2)$}.
\newblock {\em J. Number Theory}, 214:312--325, 2020.

\bibitem[Mig15]{mignot1}
Teddy Mignot.
\newblock Points de hauteur born\'{e}e sur les hypersurfaces lisses de l'espace
  triprojectif.
\newblock {\em Int. J. Number Theory}, 11(3):945--995, 2015.

\bibitem[Mig16]{Mignot}
T.~Mignot.
\newblock Points de hauteur born\'{e}e sur les hypersurfaces lisses des
  vari\'{e}t\'{e}s toriques.
\newblock {\em Acta Arith.}, 172(1):1--97, 2016.

\bibitem[Mig18]{mignot2}
Y.~Mignot.
\newblock Points de hauteur born\'ee sur les hypersurfaces lisses des
  vari\'et\'es toriques. cas g\'en\'eral.
\newblock arXiv:1802.02832, 2018.

\bibitem[MR192]{MR1225842}
{\em Flips and abundance for algebraic threefolds}.
\newblock Soci\'{e}t\'{e} Math\'{e}matique de France, Paris, 1992.
\newblock Papers from the Second Summer Seminar on Algebraic Geometry held at
  the University of Utah, Salt Lake City, Utah, August 1991, Ast\'{e}risque No.
  211 (1992).

\bibitem[Neu99]{MR1697859}
J.~Neukirch.
\newblock {\em Algebraic number theory}, volume 322 of {\em Grundlehren der
  mathematischen Wissenschaften [Fundamental Principles of Mathematical
  Sciences]}.
\newblock Springer-Verlag, Berlin, 1999.
\newblock Translated from the 1992 German original and with a note by Norbert
  Schappacher, With a foreword by G. Harder.

\bibitem[Pey95]{MR1340296}
E.~Peyre.
\newblock Hauteurs et mesures de {T}amagawa sur les vari\'et\'es de {F}ano.
\newblock {\em Duke Math. J.}, 79(1):101--218, 1995.

\bibitem[Pie16]{MR3514738}
M.~Pieropan.
\newblock Imaginary quadratic points on toric varieties via universal torsors.
\newblock {\em Manuscripta Math.}, 150(3-4):415--439, 2016.

\bibitem[PS24]{hypmethod}
M.~Pieropan and D.~Schindler.
\newblock Hyperbola method on toric varieties.
\newblock {\em J. \'Ec. polytech. Math.}, 11:107--157, 2024.

\bibitem[PSTVA21]{PSTVA}
M.~Pieropan, A.~Smeets, S.~Tanimoto, and A.~V\'{a}rilly-Alvarado.
\newblock Campana points of bounded height on vector group compactifications.
\newblock {\em Proc. Lond. Math. Soc. (3)}, 123(1):57--101, 2021.

\bibitem[RM18]{Myerson1}
S.~L. Rydin~Myerson.
\newblock Quadratic forms and systems of forms in many variables.
\newblock {\em Invent. Math.}, 213(1):205--235, 2018.

\bibitem[RM19]{Myerson2}
S.~L. Rydin~Myerson.
\newblock Systems of cubic forms in many variables.
\newblock {\em J. Reine Angew. Math.}, 757:309--328, 2019.

\bibitem[Sal98]{MR1679841}
P.~Salberger.
\newblock Tamagawa measures on universal torsors and points of bounded height
  on {F}ano varieties.
\newblock {\em Ast\'erisque}, (251):91--258, 1998.
\newblock Nombre et r\'epartition de points de hauteur born\'ee (Paris, 1996).

\bibitem[Sch16]{Sch}
D.~Schindler.
\newblock Manin's conjecture for certain biprojective hypersurfaces.
\newblock {\em J. Reine Angew. Math.}, 714:209--250, 2016.

\end{thebibliography}
\end{document}